\def\namedlabel#1#2{\begingroup
 #2%
 \def\@currentlabel{#2}%
 \phantomsection\label{#1}\endgroup
}
\theoremstyle{plain}
\newtheorem*{theorem*}{Theorem}
\newtheorem*{thmex*}{Theorem~\ref{example}}
\newtheorem*{thmasymp*}{Theorem~\ref{thmAsymp}}
\newtheorem{theorem}{Theorem}[section]
\newtheorem{corollary}[theorem]{Corollary}
\newtheorem{lemma}[theorem]{Lemma}
\newtheorem{proposition}[theorem]{Proposition}
\theoremstyle{definition}
\newtheorem{definition}[theorem]{Definition}
\newtheorem{remark}[theorem]{Remark}
\newtheorem*{acknowledgements*}{Acknowledgements}
\DeclareMathOperator{\arcsinh}{arcsinh}
\DeclareMathOperator{\arccosh}{arccosh}
\newcommand{\de}{\delta}
\newcommand{\es}{\emptyset}
\newcommand{\R}{\mathbb{R}}
\newcommand{\C}{\mathbb{C}}
\newcommand{\D}{\Delta}
\renewcommand{\l}{\lambda}
\newcommand{\N}{{\mathbb{N}}}
\newcommand{\sn}[1]{{\mathbb{S}^{#1}}}
\newcommand{\hn}[1]{{\mathbb{H}^{#1}}}
\newcommand{\hr}{\mathbb H^2\times\mathbb R}
\newcommand{\hh}{\mathbb H^2}
\renewcommand{\O}{\Omega}
\newcommand{\G}{\Gamma}
\newcommand{\ben}{\begin{enumerate}}
\newcommand{\een}{\end{enumerate}}
\newcommand{\wt}{\widetilde}
\newcommand{\g}{\gamma}
\newcommand{\cR}{{\mathcal R}}
\newcommand{\hnr}{{\hn{2}\times \R}}
\newcommand{\ve}{\varepsilon}
\newcommand{\bbD}{\mathbb{D}}
\newcommand{\abs}[1]{\vert #1 \vert}
\newcommand{\ed}{\end{document}}
\renewcommand{\S}{\Sigma}
\newcommand{\ol}{\overline}
\newcommand{\wh}{\widehat}
\definecolor{rrr}{rgb}{.9,0,.1}
\definecolor{rr}{rgb}{.8,0,.3}
\newcommand{\ekt}{\mathbb{E}(-1,\tau)}
\newcommand{\pai}{\partial_\infty}
\renewcommand{\L}{\Lambda}
\begin{document}
\title{On the asymptotic Plateau problem for area minimizing surfaces in $\ekt$.}
\author{P. Klaser \and A. Menezes
 \and A. Ramos\thanks{First and third authors were partially 
supported by CNPq/Brazil, grant number 406431/2016-7}}
\date{}
\maketitle

\begin{abstract}
We prove some existence and non-existence results for
complete area minimizing surfaces in 
the homogeneous space $\ekt$. As one of our main results,
we present sufficient conditions for a curve $\Gamma$
in $\partial_{\infty} \mathbb{E}(-1,\tau)$ to
admit a solution to the asymptotic Plateau problem, 
in the sense that there exists a complete area minimizing
surface in $\mathbb{E}(-1,\tau)$ having $\Gamma$ as its asymptotic boundary.

\vspace{.15cm}
\noindent{\it 2010 Mathematics Subject Classification:} 
Primary 53A10, Secondary 53C42.

\vspace{.1cm}

\noindent{\it Key words:}  Asymptotic Plateau Problem, Area Minimizing Surfaces.

\end{abstract}

\section{Introduction.}

In the last few years the asymptotic Plateau problem in the homogeneous space $\hr$ has been actively studied. For instance, Nelli and Rosenberg \cite{NeRo} proved that for any given Jordan curve $\Gamma\subset\partial_\infty\hr$ that is a graph over $\partial_\infty\hh$ there exists an entire minimal graph $\Sigma$ with $\Gamma$ as its asymptotic boundary; in particular, $\Sigma$ is area minimizing. Sa Earp and Toubiana \cite{setou} also considered the asymptotic Plateau problem in $\hr$ and they showed a general non existence result (see Theorem 2.1 in \cite{setou}) and got as a consequence that there is no complete properly immersed minimal surface whose asymptotic boundary is a Jordan curve homologous to zero in $\partial_\infty\hr$ contained in an open slab between two horizontal circles of $\partial_\infty\hr$ with height equal to $\pi.$ 

Kloeckner and Mazzeo \cite{KloMa} worked with a more general class of curves in the asymptotic boundary of $\hr$ (considering different compactifications of the space) and got a good characterization of curves $\Gamma$ for which there exists a minimal surface that has $\Gamma$ as its asymptotic boundary (see, for instance, Proposition 4.4 and Theorem 4.5 in \cite{KloMa}).

For the Plateau problem involving two closed curves (not homotopically trivial) in the asymptotic boundary of $\hr$, Ferrer, Mart\'in, Mazzeo and Rodr\'iguez \cite{FMMR} proved some existence and non existence results for minimal annuli having these two curves as the asymptotic boundary (see Theorem 1.2 and Theorem 5.1 in \cite{FMMR}).

In addition to the aforementioned results,
Coskunuzer~\cite{cosk1} showed that for any {\em tall curve} 
(i.e., a curve with height greater than $\pi$,
see Definition~\ref{defheight} below) in
$\pai \hnr \equiv \sn1\times \R$, there exists 
an area minimizing surface with that curve as the asymptotic boundary. 
He also showed a non existence result for certain curves that 
are not tall. Here, we obtain similar results
to the ones in~\cite{cosk1} in the ambient space $\ekt$, which is the total
space of a fibration over $\hn2$ with bundle curvature
$\tau$.
In particular, when
$\tau =0$, $\mathbb{E}(-1,0)$ is isometric to the Riemannian 
product $\hnr$, which allows us to reobtain and extend some 
of the results of~\cite{cosk1}.

Throughout this work,
unless specified otherwise, we use the cylinder model for $\ekt$.
Specifically, let $\bbD$ denote the unitary open disk
in the complex plane and let, for $\tau \in \R$,
$\ekt = (\bbD\times \R, ds^2_\tau)$, where $ds^2_\tau$ is the metric 
defined by
\begin{equation}\label{metriccyl}
ds^2_\tau = \l^2(dx^2+dy^2)+\left(2\tau\l(ydx-xdy)+dt\right)^2,
\end{equation}
for $\l = \frac{2}{1-x^2-y^2}$.
We consider the asymptotic boundary of $\ekt$ as
being induced by the product topology of $\ol{\bbD}\times \R$,
$\partial_\infty\ekt= (\partial \bbD) \times \R = \sn1\times\R$.
Moreover, if $\S$ is a complete surface immersed in $\ekt$ 
we define the {\em asymptotic boundary of $\S$}
as the set
$$\pai \S = \{(p,t) \in \sn1\times\R 
\mid \exists (p_n,t_n)_{n\in\N} \subset \S
\text{ s.t. } (p_n,t_n) \to (p,t)\}.$$

In order to state our main results,
we next give the definition of {\em height}
of a curve in $\pai\ekt$. We notice that throughout the paper, curves
will be assumed to be piecewise smooth and non-degenerate.

\begin{definition}[Height of a curve]
\label{defheight}
Let $\G$ be a finite collection of pairwise disjoint
simple closed curves in $\partial_\infty\ekt$
and $\O= \pai\ekt\setminus \G$. For each $p\in\sn1$, 
let
$\ell_p = \{p\}\times\R$ denote the vertical line over
$p$ in $\pai \ekt$ and let
$\ell_{p}^1,\,\ell_p^2,\,\ldots,\,\ell_p^{n_p}$
be the connected components of $\O \cap \ell_p$.
For $i\in \{1,\,2,\,\ldots,\,n_p\}$, let $\abs{\ell_p^i}$
denote the (possibly infinite) euclidean length of $\ell_p^i$.
Then, the {\em height of $\G$ at $p$} is
$h_{\Gamma}(p) = \min_{i\in \{1,\ldots,n_p\}}\abs{\ell_p^i}$ and
the {\em height of $\G$} is
$$h(\G) = 
\inf_{p\in\sn1} h_{\Gamma}(p).$$
\end{definition}

\begin{remark}\label{rmheightfun}
As in $\hnr$, an isometry of $\hn2$ induces an isometry in $\ekt$.
Nevertheless,
for $\tau >0$, the induced isometry changes the $t$-coordinate,
as observed in Proposition~\ref{prop-iso}. Since this change
is constant along any fiber,
the vertical distance between two
points in the same fiber is invariant under isometries. 
In particular, the definition of the height of a curve 
is well posed. 
Furthermore, differently from $\hh\times\R$, there is
no intrinsic notion of a 
height function in $\ekt$, $\tau>0$.
An example that shows this dependence is the horizontal 
slice $\{t =0\}\subset \ekt$ in the half-plane model,
which becomes (see~\eqref{eq:mudvar}) a piece of a helicoid in the disk model: 
its height should be constant and
equal to zero in the half plane model 
but it is not constant in the disk model.
To avoid this ambiguity, throughout the paper the height 
of a curve in $\pai\ekt$
is here defined for the cylinder model of $\ekt$.
\end{remark}

We next make precise the notion of a {\em tall curve} 
in $\ekt$. Note that our definition differs slightly from the
one introduced by~\cite[Definition~2.4]{cosk1}, which allows
us to treat a broader class of curves.

\begin{definition}\label{deftall}
Let $\G$ be a finite, pairwise disjoint collection of 
simple closed curves in $\partial_\infty\ekt$. 
We say that 
$\G$ is a {\em tall curve}
if $h_\G(p)>\sqrt{1+4\tau^2}\pi$ for all $p\in \G$. Otherwise, we say that
$\G$ is a {\em short curve}.
\end{definition}

Our first main result is the following.

\begin{theorem}\label{thmmain}
Let $\G \subset \pai \ekt$ be a finite collection of
pairwise disjoint simple
closed curves. If $\G$ is tall, 
there exists a complete, possibly disconnected,
area minimizing
surface $\S$ in $\ekt$ with $\partial_\infty \S = \G$.
\end{theorem}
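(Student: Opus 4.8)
The plan is to follow the classical Plateau-by-exhaustion strategy adapted to $\ekt$: solve the problem on larger and larger compact pieces and pass to a limit, using barriers to control the boundary behaviour at infinity. Concretely, I would first compactify vertically by truncating: for large $R>0$, consider the slab $S_R = \bbD\times[-R,R]$ and the curve $\G$ shifted/intersected into a finite configuration of Jordan curves $\G_R$ lying in $\partial\bbD\times[-R,R]$ that approximates $\G$. Since $\ekt$ is a homogeneous space with a cocompact isometry group, one has a local convexity / mean-convex exhaustion, so by standard geometric measure theory (Federer--Fleming compactness for integral currents, or the solution of the Plateau problem for currents in a mean-convex region) there exists an area minimizing integral current $\S_R$ in $S_R$ with $\partial \S_R = \G_R$. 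The first real step is to set this up carefully and record interior regularity: since $\dim = 2$ and the ambient is $3$-dimensional, minimizing currents are smooth embedded surfaces away from the boundary.

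The heart of the argument is the a priori height estimate that prevents $\S_R$ from escaping to infinity as $R\to\infty$, and this is exactly where the tallness hypothesis $h_\G(p) > \sqrt{1+4\tau^2}\,\pi$ enters. The key tool should be a family of barrier surfaces in $\ekt$ analogous to the ones Coskunuzer uses in $\hnr$ — namely minimal (or explicitly computable constant-mean-curvature) surfaces whose asymptotic boundary consists of two vertical lines at fixed euclidean distance, with the extremal such distance being precisely $\sqrt{1+4\tau^2}\,\pi$. The factor $\sqrt{1+4\tau^2}$ is the ``vertical stretch'' coming from the bundle term in \eqref{metriccyl}: a calculation with the metric $ds^2_\tau$ should show that the relevant minimal catenoid-type / parabolic surfaces have vertical extent governed by this constant, so that any vertical line segment of euclidean length exceeding $\sqrt{1+4\tau^2}\,\pi$ can be ``swept out'' by a foliation of barriers. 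Using $\G$ tall, for each $p\in\G$ one places such barriers to confine $\S_R$ in a compact region independent of $R$ near $\partial\bbD$, and the maximum principle (comparison of $\S_R$ against each leaf of the barrier foliation) yields that $\S_R$ stays within a fixed compact neighbourhood of a fixed surface spanning $\G$. This uniform confinement is the step I expect to be the main obstacle: one must build the barriers explicitly in $\ekt$ and verify the sharp constant, and one must handle the disk-model dependence of the height (Remark~\ref{rmheightfun}) so that the barrier estimate is genuinely coordinate-consistent.

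With uniform mass bounds on compact sets and uniform confinement in hand, I would extract by Federer--Fleming compactness a subsequence $\S_{R_k}$ converging (as currents, locally) to an area minimizing integral current $\S$ in $\ekt$; lower semicontinuity of mass under the constraint gives that $\S$ is area minimizing, and interior regularity again gives smoothness and embeddedness. It remains to identify $\pai\S = \G$. One inclusion, $\pai\S \subseteq \G$ away from $\G$, follows from the barrier confinement: near any boundary point of $\pai\ekt$ not on $\G$ the barriers push $\S$ away, so no sequence in $\S$ can accumulate there. The reverse inclusion $\G\subseteq\pai\S$ follows because the approximating currents $\S_{R_k}$ have boundary $\G_{R_k}\to\G$, so points of $\G$ are limits of points of $\S$; a little care is needed to ensure $\S$ is nonempty and actually reaches all of $\G$ (this is where allowing $\S$ to be disconnected in the statement is convenient — different Jordan components of $\G$ may be spanned by different components). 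Finally I would note the constant $\sqrt{1+4\tau^2}\,\pi$ recovers the known $\pi$-threshold of Coskunuzer when $\tau=0$, consistent with the isometry $\mathbb{E}(-1,0)\cong\hnr$.
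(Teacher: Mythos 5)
Your overall strategy (solve compact Plateau problems, use tall barriers coming with the threshold $\sqrt{1+4\tau^2}\,\pi$, pass to a minimizing limit, then identify the asymptotic boundary) is the right family of ideas, but as written there are two genuine gaps. First, your exhaustion is not actually compact: truncating only vertically gives the slab $\bbD\times[-R,R]$, whose ideal boundary $\partial\bbD\times[-R,R]$ is still at infinity, so ``solving the Plateau problem for $\G_R$'' in that slab is again an asymptotic Plateau problem and the argument is circular. You must also truncate horizontally; in the paper this is done with the compact mean-convex solid cylinders $\D_n(T)=\cup_{-T\leq h\leq T}D_n(h)$ and the radial projection $\G_n$ of $\G$ onto $\partial\D_n(T)$, which produces honest compact minimizers $\S_n$ with $\partial\S_n=\G_n$. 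Relatedly, the barriers are misdescribed: the relevant surfaces are the tall rectangles $\cR_h(r)$ (Sa Earp--Toubiana, Folha--Pe\~nafiel), whose asymptotic boundary is a ``rectangle'' of \emph{vertical} height $h>\sqrt{1+4\tau^2}\,\pi$; tallness of $\G$ lets one place such a rectangle around every point $q\in\pai\ekt\setminus\G$, and a replacement (area-swap) argument shows $\S_n$ avoids the corresponding region $U_q$. These barriers do \emph{not} confine $\S_n$ to a fixed compact set near $\partial\bbD$ (the limit surface is noncompact and accumulates on $\G$); they only exclude accumulation at points off $\G$, giving $\pai\S\subset\G$.

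The second, and more serious, gap is the step you defer with ``a little care is needed'': nothing in your argument prevents the minimizers from escaping to infinity, so the limit current could be empty or could miss part of $\G$. The fact that $\partial\S_{R_k}=\G_{R_k}\to\G$ does \emph{not} imply that limit points of $\S_{R_k}$ survive near $\G$, since locally the mass may drift away before the limit is taken. The paper's missing ingredient here is a linking-number argument: choose a horizontal plane $P_h$ whose ideal boundary meets $\G$ transversely at a point $p$, take an arc $\g\subset P_h$ with endpoints $p_1,p_2\in\pai P_h$ so that $[p_1,p_2]\cup\g$ links $\G_n$ once; then each $\S_n$ must intersect $\g$, and since $\g$ minus the two tall-rectangle regions $U_{p_1},U_{p_2}$ is compact, one gets points $q_n\in\S_n$ in a fixed compact set, hence a nonempty limit. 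Shrinking such arcs toward any $p\in\G$ (using vertical planes at points where the $t$-coordinate of $\G$ is extremal) gives $\G\subset\pai\S$. Note also that this transversality device fails when $\G$ is a finite union of parallel circles $\sn1\times\{h_i\}$; that case has to be treated separately (in the paper, the horizontal planes $P_{h_i}$ are shown to be the unique solution, using the rotational catenoids and the maximum principle), and your proposal does not account for it.
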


Note that if $\G\subset \pai \ekt$ is a short curve,
then there exists a point 
$p\in \G$ such that $h_\G(p)\leq \sqrt{1+4\tau^2}\pi$. 
Concerning such curves, 
we expect that, at least for the case where there 
is an open arc $I\subset \sn1$
such that $h_\G(p)\leq\sqrt{1+4\tau^2}\pi$ for all
$p \in I$, there is no area minimizing surface with
asymptotic boundary $\Gamma$. However, this question is still open,
even in the case of $\hnr$. 
In the following result we are able to prove a special situation
of this nonexistence result.

\begin{theorem}\label{thmnonexist}
Let $\G$ be a short curve for which there exists an open 
arc $I\subset \sn1$ where
\begin{equation}\label{eq:boundheight}
h_\G(p)< (\sqrt{1+4\tau^2}-4\abs{\tau})\pi,
\ \ \mbox{for all} \ \ p \in I.
\end{equation}
Then, there is no area minimizing
surface $\S$ in $\ekt$ with $\pai \S = \G$.
\end{theorem}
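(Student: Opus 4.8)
The plan is to argue by contradiction using a barrier/maximum-principle argument with explicit minimal or area-minimizing surfaces as barriers, exploiting the gap between the threshold $\sqrt{1+4\tau^2}\pi$ appearing in the existence theorem and the smaller threshold $(\sqrt{1+4\tau^2}-4|\tau|)\pi$ in the hypothesis. Suppose $\S$ is an area minimizing surface with $\pai\S=\G$. Over the open arc $I\subset\sn1$, the asymptotic boundary $\G$ stays, on each fiber $\ell_p$, within a slab of height strictly less than $(\sqrt{1+4\tau^2}-4|\tau|)\pi$; after a vertical translation (an isometry of $\ekt$) we may assume this slab is centered, so that over $I$ the set $\G$ is contained in $\sn1\times[-c,c]$ with $2c<(\sqrt{1+4\tau^2}-4|\tau|)\pi$. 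The idea is then to trap $\S$ (or at least the part of $\S$ asymptotic to the arc $I$) between two ``horizontal-type'' barriers and derive that $\S$ cannot reach all of $\G$, or alternatively that $\S$ must coincide with a barrier, contradicting completeness or the asymptotic boundary condition.

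\medskip

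Concretely, the key step is to produce the right family of barriers. In $\hh\times\R$ the natural barriers are the surfaces $\partial_\infty$-asymptotic to a pair of horizontal circles at heights $\pm\pi/2 + \varepsilon$, obtained from the catenoid-type or the ``$\log$-type'' minimal surfaces, whose existence and asymptotic behavior give the classical Sa Earp--Toubiana non-existence result for slabs of height $\pi$. In $\ekt$ with $\tau>0$ one uses instead minimal (or constant mean curvature, but here minimal) surfaces invariant under the one-parameter group of vertical translations composed with rotations — these are the analogues of the ``helicoidal'' or rotational catenoids in $\ekt$ — together with the fact, recalled in the excerpt (see Remark~\ref{rmheightfun} and Proposition~\ref{prop-iso}), that a rotation of $\hh$ lifts to an isometry of $\ekt$ that shears the $t$-coordinate by an amount linear in $\tau$. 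This shear is precisely the source of the $4|\tau|\pi$ correction: when one transports a barrier that in the ``straightened'' picture sits in a slab of height $\sqrt{1+4\tau^2}\pi$ around the full circle $\sn1$, the $t$-coordinate can drift by up to (a constant times) $\tau\pi$, and one must subtract this drift to guarantee the barrier genuinely separates $\S$ from part of $\G$. I would set up the barrier so that it is a complete area minimizing (or at least minimal and mean-convex on the correct side) surface $\cB$ with $\pai\cB$ a pair of curves lying just outside the slab containing $\G|_I$, apply the strong maximum principle / the fact that two distinct area minimizing surfaces cannot touch, and slide $\cB$ vertically to force a contradiction: either $\S$ is pushed entirely to one side of $\cB$ (so $\pai\S$ cannot meet the part of $\G$ on the other side over $I$), or $\S$ touches $\cB$ at an interior point and hence $\S\equiv\cB$, which is impossible since $\pai\cB\neq\G$.

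\medskip

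In more detail, the steps in order would be: (i) normalize by a vertical isometry so that $\G\cap(\ell_p)\subset\{p\}\times(-c,c)$ for $p\in I$ with $2c<(\sqrt{1+4\tau^2}-4|\tau|)\pi$, and pick a sub-arc $I'\Subset I$ and a point where we will run the argument; (ii) construct, via the known minimal surfaces in $\ekt$ that are invariant under screw motions (these give, for each ``width'' parameter, a complete minimal surface whose asymptotic boundary consists of two graphs over $\sn1$ whose vertical separation can be prescribed, and which, by the gap hypothesis, can be arranged to have that separation between $2c$ and $\sqrt{1+4\tau^2}\pi$ even after accounting for the $\tau$-shear), and verify the surface is mean-convex toward the slab; (iii) since $\S$ is area minimizing and its asymptotic boundary over $I$ lies strictly inside the slab bounded by $\pai\cB$, use a sweeping/continuity argument with vertical translates $\cB+se_t$ of the barrier, starting from $s$ large (where $\cB+se_t$ is disjoint from the relevant part of $\S$ by the asymptotic boundary description) and decreasing $s$: a first point of contact would occur at an interior point or at infinity; the interior case contradicts the maximum principle for minimal surfaces (or uniqueness of area minimizers), and the contact-at-infinity case is ruled out because the asymptotic boundaries are disjoint over $I'$; (iv) conclude that $\cB+se_t$ never touches $\S$ over $I'$, but letting $s$ range over an interval of length exceeding $\sqrt{1+4\tau^2}\pi$ forces the part of $\S$ over $I'$ into a slab of vertical size $<\sqrt{1+4\tau^2}\pi$ near infinity, while the asymptotic boundary $\G$ there has a component — the reasoning mirrors Theorem~\ref{thmmain}'s threshold — forcing $\S$ to be disconnected in a way incompatible with reaching $\G$ over $I'$, the desired contradiction.

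\medskip

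The main obstacle I anticipate is step (ii): getting explicit enough control on a barrier surface in $\ekt$ — in particular pinning down the exact contribution of the bundle curvature $\tau$ to the vertical drift of its asymptotic boundary — so that the numerical threshold comes out to be exactly $(\sqrt{1+4\tau^2}-4|\tau|)\pi$ rather than some weaker bound. This likely requires writing the candidate minimal surface in the half-plane (or disk) model, solving or estimating the relevant ODE for the screw-motion-invariant minimal surfaces, and carefully tracking how the isometry of Proposition~\ref{prop-iso} moves the asymptotic boundary; the factor $4|\tau|$ strongly suggests that the barrier is built by composing a ``height $\sqrt{1+4\tau^2}\pi$'' vertical object with an isometry whose $t$-shear, over a half-turn of $\sn1$, contributes $2\cdot(2|\tau|\pi)=4|\tau|\pi$. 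A secondary technical point is justifying the maximum principle at infinity / the sweeping argument rigorously for possibly non-smooth or non-compact area minimizing surfaces, for which one would invoke the interior regularity of area minimizers and a standard compact-exhaustion plus the fact that area minimizing surfaces cannot be mutually tangent without coinciding.
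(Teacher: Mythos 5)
There is a genuine gap, and it starts with a misreading of the hypothesis. You write that over $I$ the curve $\G$ ``stays, on each fiber $\ell_p$, within a slab of height strictly less than $(\sqrt{1+4\tau^2}-4|\tau|)\pi$.'' That is not what $h_\G(p)$ small means: the height is the length of the shortest \emph{complementary} interval of $\O=\pai\ekt\setminus\G$ on the fiber, so the hypothesis only says that over $I$ two consecutive sheets of $\G$ are vertically close; $\G$ itself may have arbitrarily large vertical extent over $I$ (and over the rest of $\sn1$). This wrecks the core of your plan: the complete rotational/screw-invariant barriers you propose have asymptotic boundary two full circles (or graphs) over all of $\sn1$, and since $\G$ is not contained in the slab they bound, the asymptotic boundaries of $\cB$ and $\S$ will in general cross away from $I$. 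Your vertical sweep therefore cannot be started or stopped cleanly — you cannot rule out ``contact at infinity,'' and in fact $\S$ must genuinely cross every such barrier wherever $\G$ leaves the slab, so no first-contact/maximum-principle contradiction arises. Relatedly, your step (iv) never actually produces a contradiction: ``forcing $\S$ to be disconnected in a way incompatible with reaching $\G$ over $I'$'' is not an argument. Your approach would prove a Sa Earp--Toubiana--type statement for curves lying entirely in a thin slab, which is a different (and here unavailable) hypothesis.

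The paper avoids exactly this problem by localizing at the short gap instead of sweeping a global barrier. It first proves (Proposition~\ref{propCatenoids}, via the area comparison in Lemmas~\ref{lemmaepsilon}--\ref{finalprop}) that for any $h<\frac{\pi}{2}\sqrt{1+4\tau^2}$ there is a \emph{compact, connected} area minimizing surface $S(h)$ with boundary two circles at heights $\pm h$; connectedness comes from showing the truncated catenoid has less area than the two horizontal disks. This compact piece, capped by the two disks, bounds a compact region $A$, which is then pushed toward the point of $\sn1$ inside $I$ by the isometries of Corollary~\ref{cor-iso}, whose $t$-drift is bounded by $2\tau\pi$; that is precisely where the correction $4|\tau|\pi$ enters (the usable height $2h<\sqrt{1+4\tau^2}\pi$ is eaten by at most $2\tau\pi$ at the top and $2\tau\pi$ at the bottom), so you rightly guessed the source of the constant but attached it to the wrong construction. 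Finally, the contradiction is not a sliding maximum principle at all: since the normalized segment $\{e^{i\theta}\}\times[-T,T]$ meets $\G$ in exactly two interior points, the translated region $A_{r_0}$ must meet both components $E_1,E_2$ of $\ekt\setminus\S$, so the compact minimizer $S_{r_0}$ crosses $\S$, and a cut-and-paste exchange between the two area minimizers produces a nonsmooth area minimizing surface, which is impossible. If you want to salvage your outline, you would need to replace your global complete barrier by such a compact connected minimizing ``catenoid piece'' (whose existence is itself the technical heart, your step (ii) in disguise) and replace the sweep by this separation-plus-exchange argument.
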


\begin{remark}
In the case $\tau = 0$, Theorem~\ref{thmnonexist} is equivalent to
the nonexistence result of Coskunuzer~\cite{cosk1}.
When $\tau \neq 0$, it is not clear whether the bound assumed 
in~\eqref{eq:boundheight} is sharp,
and it only gives information for $\abs{\tau}<\frac{1}{\sqrt{12}}$.
This bound is necessary
to our proof since isometries in $\ekt$
do not preserve the $t$-coordinate; see Proposition~\ref{prop-iso},
Corollary~\ref{cor-iso} and Remark~\ref{rm-iso}. 
\end{remark}

Let us mention that after the completion of this paper, 
J. Castro-Infantes~\cite{CI} considered the same asymptotic 
Plateau problem and among other results, using the 
halfplane model for $\hh$, was able to improve the 
constant in Theorem \ref{thmnonexist}.

The organization of the paper is the following.
In Section~\ref{secPrel}, we present some background 
material for the study of minimal surfaces in $\ekt$. 
In Section~\ref{secproofs} we prove our main theorems;
and in Section~\ref{secproof} we prove a technical fact 
used in the proof of Theorem~\ref{thmnonexist}.

\begin{acknowledgements*} The authors would like to thank Baris 
Coskunuzer, Francisco Mart\'in and Magdalena Rodr\'iguez
for useful discussions concerning the topics of this manuscript.
\end{acknowledgements*}

\section{Preliminaries.}\label{secPrel}

Let $\wt{{\rm SL}}(2,\R)$ denote the universal covering of
the special linear group of $2\times 2$ 
real matrices. 
For each $\tau \in \R$ there exists a left invariant metric
$ds^2_\tau$ in $\wt{\rm SL}(2,\R)$
such that $(\wt{\rm SL}(2,\R),ds^2_\tau) = \ekt$ becomes the total space
of a Riemannian fibration over the hyperbolic plane $\hn2$ with
bundle curvature $\tau$. Note that for any $\tau \in \R$ the group of isometries
of $\ekt$ has dimension four (for a nice
discussion about the
$\mathbb{E}(\kappa,\tau)$ spaces, see Daniel~\cite{dan1}). 
A special case to be considered is when $\tau = 0$, where
$\mathbb{E}(-1,0)$ is isometric to the Riemannian product
$\hnr$. 
In particular, all of our results also hold in $\hnr$.
We also note that for $\tau \neq 0$,
the spaces $\ekt$ and $\mathbb{E}(-1,-\tau)$ are isometric,
hence it is without loss of generality that we assume that 
$\tau \geq 0$.

As stated in the Introduction, we use the cylinder model
$(\bbD\times\R,ds^2_\tau)$ to $\ekt$, where $ds^2_\tau$
is given in~\eqref{metriccyl}.
We also let $\pi_1\colon\ekt\to\bbD$ and $\pi_2\colon \ekt \to \R$
be the projections onto the first and the second coordinates,
respectively.

The isometry group of $\ekt$ is generated by the 
lifts of the isometries of the disk model of
$\hh$, together with vertical translations along the fibers (see, for
instance, Theorem~2.9 in \cite{Younes}). Precisely, 
the following holds.

\begin{proposition}
\label{prop-iso}
The isometries of $\ekt$ are given by
\begin{equation}\label{eq:liftf}
F(z,t)=(f(z),t- 2\tau \mbox{arg}f'(z)+c)
\end{equation}
or
\begin{equation}\label{eq:liftg}
G(z,t)=(\overline{f(z)},-t+ 2\tau \mbox{arg}f'(z)+c),
\end{equation} where $f$ is a
positive isometry of the disk model of $\hh$, $c\in \R$ and arg 
$f':\hh\to\R$ is a smooth angle function for $f'$.
\end{proposition}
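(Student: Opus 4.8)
The plan is to determine the full isometry group of $\ekt$ in the cylinder model directly from its description as a Riemannian submersion $\pi\colon\ekt\to\hh$ with connected fiber $\R$ and bundle curvature $\tau$. First I would recall the standard structural fact (see Daniel~\cite{dan1}): the isometry group of $\ekt$ has dimension four, the fibers are the integral curves of the unit Killing vector field $\partial_t$ generating the vertical translations, and every isometry of $\ekt$ preserves this vertical foliation (for $\tau\neq 0$ the vertical vector field is intrinsically characterized, and for $\tau=0$ it is the $\R$-factor of $\hnr$, which one still argues is preserved up to the index-two ambiguity already present there). Consequently any isometry $\Phi$ descends to an isometry $\varphi$ of the base $\hh$, i.e.\ $\pi\circ\Phi=\varphi\circ\pi$, and $\varphi$ is either a positive (orientation-preserving) isometry $f$ of the disk model of $\hh$, or the composition of such an $f$ with the conjugation $z\mapsto\bar z$. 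This reduces the problem to: (i) showing the vertical translations $t\mapsto t+c$ are isometries and exhaust the kernel of $\Phi\mapsto\varphi$, and (ii) for each positive isometry $f$, exhibiting one isometry of $\ekt$ lying over it, since then the general isometry over $f$ is obtained by post-composing with a vertical translation.

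For step (i), vertical translations are isometries because $\partial_t$ is Killing, which is immediate from the fact that the coefficients of $ds^2_\tau$ in~\eqref{metriccyl} do not depend on $t$; and the kernel of the descent map consists of isometries fixing every fiber setwise, hence acting on each fiber $\cong\R$ by an isometry, and a continuity/connectedness argument (the isometry is determined by a global vertical shift, since it must also commute with the submersion structure) forces this to be a single global translation $t\mapsto t+c$ or, in the orientation-reversing case, $t\mapsto -t+c$. For step (ii) the key computation is to check that the map $F(z,t)=(f(z),\,t-2\tau\arg f'(z)+c)$ given in~\eqref{eq:liftf} is indeed an isometry: one pulls back $ds^2_\tau$ under $F$ and uses that $f$ is a holomorphic automorphism of $\bbD$, so that $f^*(\lambda^2|dz|^2)=\lambda^2|dz|^2$ (conformality of the hyperbolic metric) together with the transformation rule for the connection one-form $2\tau\lambda(y\,dx-x\,dy)$ under $f$; the correction term $-2\tau\,d(\arg f'(z))$ is exactly what is needed to absorb the change in the contact form, using the Cauchy--Riemann equations to identify $d(\arg f'(z))=\Im(f''/f')\,dz$ with the relevant geometric quantity. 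The orientation-reversing family~\eqref{eq:liftg} is then obtained by precomposing $F$ with the single orientation-reversing isometry $(z,t)\mapsto(\bar z,-t)$, whose verification is an analogous but shorter pullback check.

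Finally I would run a dimension/parameter count to confirm completeness of the list: the positive isometries $f$ of $\bbD$ form a $3$-dimensional group, the constant $c$ contributes one more parameter, giving the $4$-dimensional identity component matching the known dimension, and the conjugation gives the other component; since~\eqref{eq:liftf} and~\eqref{eq:liftg} together realize both components and we have shown every isometry descends to $\hh$ and differs from one of these by an element of the kernel computed in step (i), the list is exhaustive. The main obstacle is step (ii), and specifically getting the sign and the precise form of the fiber correction $-2\tau\arg f'(z)$ correct: this requires carefully handling the fact that $\arg f'$ is only defined up to locally constant multiples of $2\pi$ (hence the hypothesis that a smooth global angle function for $f'$ is chosen, which exists since $\bbD$ is simply connected and $f'$ is nonvanishing) and matching the exterior derivative $d(\arg f')$ against the transformation of the contact one-form under $f$; this is where the Riemannian submersion formalism of Daniel~\cite{dan1} — writing $ds^2_\tau=\pi^*(ds^2_{\hh})+(\text{connection form})^2$ and tracking how $f$ acts on the connection form — makes the bookkeeping clean, and I would phrase the verification in that language rather than grinding through the $(x,y,t)$ coordinates by brute force.
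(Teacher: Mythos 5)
The paper itself offers no proof of this proposition: it is quoted from the literature (Theorem~2.9 of the reference by Younes cited just before the statement), so any argument you give is necessarily a different route from the paper's. Your outline is the standard one that such a proof follows: every isometry preserves the vertical line field, hence descends to $\mathrm{Isom}(\hh)$; one identifies the isometries lying over the identity; and one produces a lift of each positive $f$ by pulling back $ds^2_\tau$ and checking that the fibre correction $-2\tau\arg f'$ absorbs the change of the connection form. That pullback is indeed the heart of the matter, and your dimension count is a reasonable sanity check.

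There is, however, one genuine gap, in your step (i). As phrased, you allow the kernel of the descent map to contain $t\mapsto -t+c$ ``in the orientation-reversing case''. For $\tau\neq 0$ this is precisely what must be \emph{ruled out}: under $(z,t)\mapsto(z,-t+c)$ the cross term $4\tau\lambda(y\,dx-x\,dy)\,dt$ of $ds^2_\tau$ changes sign, so such a map is not an isometry; a reversal of the fibre orientation can only occur coupled with an orientation-reversing isometry of the base, which is exactly what~\eqref{eq:liftg} encodes. If your kernel statement were taken at face value, composing $t\mapsto -t+c$ with~\eqref{eq:liftf} would produce maps of the form $(f(z),-t+\cdots)$ with $f$ positive, which are not on the list, so the argument as written does not yet yield the stated classification. (This is also exactly where $\tau=0$ differs: in $\hnr$ the maps $(z,t)\mapsto(z,-t+c)$ \emph{are} isometries and the isometry group has four components.) The fix is the one-line sign check above, showing that for $\tau\neq 0$ the sign $\pm$ in the fibre action is forced to agree with the orientation behaviour of the induced base isometry. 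Two smaller points: the claim that the vertical direction is intrinsically characterized should be justified (for instance, it is the Ricci eigendirection with eigenvalue $2\tau^2$, distinct from the horizontal eigenvalue $-1-2\tau^2$), and the identity you invoke should read $d(\arg f')=\Im\bigl(\tfrac{f''}{f'}\,dz\bigr)$, the imaginary part of a complex one-form, rather than $\Im(f''/f')\,dz$.
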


One of the main difficulties that arises when working in the cylinder
model of $\ekt$ when $\tau \neq 0$ is that isometries do not preserve the
$t$-coordinate. The next result 
gives an upper bound to this gap on the $t$-coordinate for
some isometries of $\ekt$; we make use of this bound
in the proof of our non-existence result.

\begin{corollary}\label{cor-iso}
For any positive isometry $f$ of the disk model of $\hn2$,
there exists an isometry $F\colon\ekt\to \ekt$ such that the projections
$\pi_1$ and $\pi_2$ satisfy, for all $z\in \bbD$ and $t\in\R$,
that $\pi_1(F(z,t)) = f(z)$ and
$\abs{\pi_2(F(z,t)) - t}<2\tau\pi$.
\end{corollary}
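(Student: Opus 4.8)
\textbf{Proof proposal for Corollary~\ref{cor-iso}.}

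The plan is to use Proposition~\ref{prop-iso} to produce the isometry $F$ explicitly and then to control the error term $\pi_2(F(z,t)) - t$. Given the positive isometry $f$ of the disk model of $\hn2$, I would take $F$ to be the lift given by \eqref{eq:liftf} with $c = 0$, so that
\[
F(z,t) = \bigl(f(z),\, t - 2\tau\,\mathrm{arg}\,f'(z)\bigr).
\]
The first property is then immediate: $\pi_1(F(z,t)) = f(z)$. For the second property, we compute $\pi_2(F(z,t)) - t = -2\tau\,\mathrm{arg}\,f'(z)$, so the claim reduces to showing that one can choose the smooth angle function $\mathrm{arg}\,f'$ on $\hh$ so that $\abs{\mathrm{arg}\,f'(z)} < \pi$ for all $z \in \bbD$.

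The key step is therefore a statement about positive isometries of the disk: the derivative $f'$, viewed as a map $\bbD \to \C \setminus \{0\}$, has image avoiding a ray from the origin, so that a continuous (hence smooth) branch of its argument exists with total oscillation less than $2\pi$; after an appropriate additive normalization (absorbed, if necessary, into the constant $c$) this branch can be taken to lie in an interval of length less than $2\pi$ centered at $0$, giving $\abs{\mathrm{arg}\,f'} < \pi$. Concretely, a positive isometry of $\bbD$ is a Möbius transformation $f(z) = e^{i\theta}\frac{z-a}{1-\bar a z}$ with $\abs{a}<1$, and a direct computation gives $f'(z) = e^{i\theta}\frac{1-\abs{a}^2}{(1-\bar a z)^2}$. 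Hence $\mathrm{arg}\,f'(z) = \theta - 2\,\mathrm{arg}(1-\bar a z) \pmod{2\pi}$, and since $z \mapsto 1-\bar a z$ maps $\bbD$ into the open disk of radius $\abs{a}<1$ centered at $1$, its argument stays in an open interval of length less than $\pi$ (in fact less than $\arcsin\abs{a}\cdot 2 < \pi$). Thus $2\,\mathrm{arg}(1-\bar a z)$ ranges over an interval of length less than $2\pi$; choosing the branch of $\mathrm{arg}\,f'$ by subtracting the appropriate constant multiple of $2\pi$ and folding $\theta$ in, we obtain a smooth $\mathrm{arg}\,f'$ with values in an open interval of length strictly less than $2\pi$. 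Recentering this interval at the origin (which only shifts $t$ by a constant, and is allowed since \eqref{eq:liftf} permits any $c \in \R$) yields $\abs{\mathrm{arg}\,f'(z)} < \pi$ on all of $\bbD$, and therefore $\abs{\pi_2(F(z,t)) - t} = 2\tau\,\abs{\mathrm{arg}\,f'(z)} < 2\tau\pi$.

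I expect the main obstacle to be a clean justification that the branch of $\mathrm{arg}\,f'$ can be chosen with oscillation \emph{strictly} less than $2\pi$ (not merely $\le 2\pi$) uniformly, and that the recentering is legitimate; the geometric fact that $1-\bar a z$ lies in a disk not containing $0$ makes this transparent, but one should be careful that the inequality is strict and that $\bbD$ being open (rather than $\ol{\bbD}$) is what prevents the degenerate case $\abs{a}=1$. Everything else is a routine computation with Möbius transformations.
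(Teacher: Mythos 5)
Your proof is correct and follows essentially the same route as the paper: both write $f$ explicitly as a M\"obius transformation, observe that $f'$ is a nonzero constant times the square of an affine image of $\bbD$ lying at positive distance from the origin, so $f'(\bbD)$ is contained in an open sector of angle strictly less than $2\pi$, and then choose the branch of $\arg f'$ together with the constant $c$ in \eqref{eq:liftf} to center the vertical shift. The only caveat is your opening choice $c=0$: a genuine smooth angle function for $f'$ need not take values in $(-\pi,\pi)$, so the recentering must indeed be absorbed into $c$, exactly as you acknowledge later in the argument.
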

\begin{proof}
First, note that any positive isometry of the disk model of $\hn2$ can be
represented by a Möbius transformation
$$f(z) = \frac{w_1z-\ol{w_2}}{w_2z-\ol{w_1}},$$
where $w_1,\,w_2\in \C$ are such that
$\abs{w_1}^2-\abs{w_2}^2 = 1$. In particular, it holds that
\begin{equation}\label{eq:fprime}
f'(z) = \frac{-1}{(w_2z-\ol{w_1})^2} = 
-\ol{w_2}^2\frac{(\ol{z}-o)^2}{\abs{w_2z-\ol{w_1}}^4},
\end{equation}
where $o = \frac{w_1}{\ol{w_2}}$. Note that
$\abs{o} = \frac{1}{\abs{f(0)}} >1$, hence $f'(z) \neq 0$
for all $z\in\bbD$.
For any $\theta_1<\theta_2$, let
$$\L_{\theta_1,\,\theta_2} = \{re^{i\theta}\in \C\mid r>0 \text{ and } 
\theta\in (\theta_1,\,\theta_2)\}.$$
We next analyze the image set $f'(\bbD)$ to show that there
exist $\theta_1<\theta_2$ with
$\theta_2-\theta_1<2\pi$ such that $f'(\bbD) \subset \L_{\theta_1,\theta_2}$.

Let $\wt{\theta} \in[0,2\pi)$ be such that 
$-\ol{w_2}^2 = \abs{w_2}^2e^{i\wt{\theta}}$. Since multiplication by a positive
constant does not change the argument of a complex number,
it follows from~\eqref{eq:fprime} that 
$$f'(z)\in \L_{\theta_1,\theta_2} \quad \iff \quad
(\ol{z}-o)^2\in \L_{\theta_1-\wt{\theta},\theta_2-\wt{\theta}}
 \quad \iff \quad
\ol{z}-o\in \L_{\frac{\theta_1-\wt{\theta}}{2},\frac{\theta_2-\wt{\theta}}{2}}.$$

Note that $\{\ol{z}-o\mid z\in \bbD\}$ is an
open disk in $\C$ with a positive distance $\abs{o}-1$ to the origin.
Hence, there are 
$\varphi_1<\varphi_2$ with $\varphi_2-\varphi_1 < \pi$ such that
$\{\ol{z}-o\mid z\in \bbD\}\subset \L_{\varphi_1,\,\varphi_2}$.
After choosing, for $i = 1,2$,
$\theta_i = 2\varphi_i+\wt{\theta}$, it follows that
$f'(\bbD) \subset \L_{\theta_1,\theta_2}$ with $\theta_2-2\pi<\theta_1<\theta_2$.

This implies that we may choose a branch of the argument function
such that for all $z\in \bbD$, $\arg(f'(z))\in (\theta_1,\theta_2)$.
After letting $c = 2\tau(\theta_1-\pi)$ in~\eqref{eq:liftf},
the result follows.
\end{proof}

\begin{remark}\label{rm-iso}
The bound $2\tau \pi$ on Corollary~\ref{cor-iso} cannot, in general, be improved.
Indeed, $\sup_{z\in\bbD}\abs{\pi_2(F(z,t)) - t}$ depends uniquely
on $\abs{f(0)}$, as shown in the proof of Corollary~\ref{cor-iso}. Moreover,
if $\{f_n\}_{n\in\N}$ is a sequence of isometries such that
$\lim_{n\to \infty} \abs{f_n(0)}= 1$,
then the respective isometries $F_n$ satisfy
$\lim_{n\to \infty}\sup_{z\in\bbD}\abs{\pi_2(F_n(z,t)) - t}
= 2\tau\pi$.
\end{remark}

In the cylinder model to $\ekt$,
both horizontal planes $\{t=t_0\}$ 
and vertical planes (i.e. the inverse image of a 
geodesic of $\hn2$ by $\pi_1$) are minimal (in fact, they are area
minimizing) surfaces.
We next describe some other
families of minimal surfaces in $\ekt$ that will 
be used as barriers throughout this paper.

\subsection{Rotational Catenoids} 
\label{sec-cat}

We first describe a one-parameter family of complete 
(without boundary) minimal annuli in $\ekt$, which plays
a key role in the proof of Theorem~\ref{thmnonexist}.
Such a family was first obtained by
B. Nelli and H. Rosenberg~\cite{NeRo} for the case $\tau =0$ and
extended to the case where $\tau\neq0$
by C. Pe\~nafiel~\cite{p1}.
Each surface in this family is called a {\em catenoid}
of $\ekt$ and is invariant
under the group of isometries corresponding to rotations about
the $t$-axis of the cylinder model.

Following the notation of~\cite{p1}, for any $d>0$ let
$u_d\colon (\arcsinh(d),\infty)\to (0,\infty)$
be defined by
\begin{equation}\label{defud}
u_d(s) = \int_{{\rm arcsinh}(d)}^s
d\sqrt{\frac{1+4\tau^2\tanh^2(\frac{r}{2})}{\sinh^2(r)-d^2}}dr.
\end{equation}
Then, $u_d$ extends continuously to $s = \arcsinh(d)$ by setting
$u_d(\arcsinh(d)) = 0$ and is strictly increasing. Moreover, 
there exists an increasing function 
$d>0\mapsto {\bf h}(d)\in(0,\frac{\pi}{2}\sqrt{1+4\tau^2})$ such that,
for each $d>0$, $\lim_{s\to \infty}u_d(s) = {\bf h}(d)$. It also holds that
$$\lim_{d\to 0^+} {\bf h}(d) = 0,\quad
\lim_{d\to \infty}{\bf h}(d) = \frac{\pi}{2}\sqrt{1+4\tau^2}.$$
Using this notation, for each $d>0$
the catenoid $M_d$ given by~\cite[Propositions 3.6 and 3.9]{p1}
is $M_d = M_d^+\cup M_d^-$, where
$M_d^+$ and $M_d^-$ are the rotational surfaces parameterized by
$$M_d^\pm =\{ (\tanh\left(r/2\right)\cos(\theta),
\tanh\left(r/2\right)\sin(\theta),\pm u_d(r)) 
\mid r\in [\arcsinh(d),\infty),\,\theta \in[0,2\pi)\}.$$

It follows directly from its definition that the
asymptotic boundary of $M_d$ is the union of the two horizontal circles 
$\sn1\times\{-{\bf h}(d)\}$ and $\sn1\times\{{\bf h}(d)\}$.

\subsection{Tall Rectangles}

Here we will present some key properties 
of complete minimal planes in $\ekt$
that are invariant under a one-parameter group of
hyperbolic isometries. These surfaces are
the so-called
{\em tall rectangles}
and were first described in the $\tau =0$
case by Sa Earp and Toubiana~\cite{setou} and extended, when
$\tau\neq0$,
to the halfspace model for $\ekt$ by Folha and
Pe\~nafiel~\cite{fp1}. 
In what follows,
we describe this family in the cylinder model and
prove that they are in fact area minimizing surfaces.

For a fixed $\tau\geq0$, let
$h>\pi\sqrt{1+4\tau^2}$ and $r\in(0,\pi)$ be given and let
$$\g_0 = \left\{\left(-\cos(\theta),-\sin(\theta),
4\tau\arctan\left(\frac{\sin(\theta)}{1+\cos(\theta)}\right)\right)
\mid \theta\in [-r,r]\right\},$$
$$\g_1 = \left\{\left(-\cos(\theta),-\sin(\theta),
h+4\tau\arctan\left(\frac{\sin(\theta)}{1+\cos(\theta)}\right)\right)
\mid \theta\in [-r,r]\right\}.$$
Using this notation, we next prove the following.

\begin{proposition}\label{proptallrec}
There exists an area minimizing plane
$\cR_h(r)\subset \ekt$, invariant under a one-parameter
group of hyperbolic isometries of $\ekt$ and
with asymptotic boundary given by the union of $\g_0,\g_1$ 
and the two vertical segments joining their
endpoints (see Figure~\ref{figrec}).
\end{proposition}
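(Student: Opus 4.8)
The plan is to transport the known family of tall rectangles from the halfspace model (Folha--Pe\~nafiel~\cite{fp1}) into the cylinder model via the change of variables~\eqref{eq:mudvar}, identify the resulting surface $\cR_h(r)$ as a complete minimal plane invariant under a one-parameter group of hyperbolic isometries, verify that its asymptotic boundary is exactly the prescribed curve, and finally upgrade ``minimal'' to ``area minimizing'' by a calibration/foliation argument. First I would recall from~\cite{fp1} the explicit minimal graphs over an ideal geodesic quadrilateral in the halfspace model of $\hh$: for the slab bounded by two vertical geodesics a horizontal height $h>\pi\sqrt{1+4\tau^2}$ produces a complete minimal surface whose boundary at infinity consists of two horizontal arcs at heights differing by $h$ together with the two vertical segments over the endpoints. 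Applying the hyperbolic isometry that sends the halfspace picture to the disk picture (and the corresponding lift from Proposition~\ref{prop-iso}, which shifts the $t$-coordinate by $-2\tau\arg f' + c$), the two horizontal arcs acquire precisely the $4\tau\arctan(\sin\theta/(1+\cos\theta))$ correction term appearing in the definitions of $\g_0$ and $\g_1$; this is a direct computation with the M\"obius map and the formula for $\arg f'$, and it pins down $\cR_h(r)$ as the image surface. Invariance under a one-parameter group of hyperbolic isometries is inherited from the halfspace description, since the family of tall rectangles there is by construction invariant under the hyperbolic translations along the common perpendicular geodesic of the two bounding ideal geodesics, and conjugating by an isometry of $\ekt$ conjugates that one-parameter group into another one-parameter group of hyperbolic isometries.

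For the area-minimizing claim I would use the standard trick of exhibiting $\cR_h(r)$ as a leaf of a minimal foliation of (an open region of) $\ekt$, which gives a calibration. Concretely, the one-parameter group of hyperbolic isometries acting on the two ideal endpoints of the base geodesic also moves the whole base geodesic through the pencil of geodesics sharing those two endpoints at infinity; combined with vertical translations, one gets a two-parameter family of tall rectangles $\cR_h(r)$ (varying the base geodesic within the pencil and translating vertically) that foliates a region of $\ekt$, or at least foliates a neighborhood of $\cR_h(r)$. The unit normal field of this foliation is then a closed (indeed parallel along leaves, since each leaf is minimal) calibrating $2$-form in the sense adapted to $\ekt$, and the usual comparison argument shows each leaf minimizes area among surfaces with the same boundary, hence in particular is area minimizing in the complete (asymptotic-boundary) sense once one also notes the leaves are properly embedded and separate the region. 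An alternative, if constructing the global foliation is delicate, is to note that $\cR_h(r)$ is a graph (in suitable coordinates coming from the halfspace model) over a convex domain, apply the general principle that minimal graphs in $\ekt$ over mean-convex domains are area minimizing (the graph of a solution of the minimal surface equation is calibrated by the obvious extension of its normal), and then take the limit $r\to\pi$ if the full tall rectangle is needed as a limit of graphs.

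The main obstacle I expect is the area-minimizing step rather than the existence/identification step: producing an honest minimal foliation by tall rectangles in a neighborhood of $\cR_h(r)$ — and checking the leaves are pairwise disjoint and sweep out an open set — requires care because the vertical correction term coming from the bundle curvature $\tau$ distorts the naive picture one has in $\hnr$, and one must make sure the two-parameter family does not self-intersect. The cleanest route is probably to work in the halfspace model of $\ekt$, where~\cite{fp1} already gives the surfaces as graphs with explicit defining functions, verify the foliation property there by a monotonicity argument in the graph functions (the height parameter $h$ and the translation parameter move the graph monotonically), and then push everything forward to the cylinder model by an isometry, which preserves the area-minimizing property. A secondary technical point is to confirm that ``area minimizing'' is meant in the sense appropriate to complete noncompact surfaces with prescribed asymptotic boundary (minimizing area in every compact region among competitors with the same boundary), which follows once the calibration is global on the region foliated; I would state this convention explicitly at the start of the proof.
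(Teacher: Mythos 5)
Your overall skeleton coincides with the paper's: existence, invariance and the identification of the asymptotic boundary are obtained exactly as you propose, by pushing the Folha--Pe\~nafiel surfaces $S_h(m)$ from the halfspace model of $\ekt$ into the cylinder model through the isometry~\eqref{eq:mudvar} (for $\tau=0$ the paper simply quotes Sa Earp--Toubiana~\cite{setou}). The genuine gap is in your calibration step, which is the only nontrivial point. The sweeping family you describe cannot exist as stated: the one-parameter group of hyperbolic isometries fixing the two ideal endpoints of the base geodesic is precisely the invariance group of $S_h(m)$, so it maps the surface onto itself and cannot move it through a family of disjoint leaves; moreover, two distinct points of $\pai\hh$ determine a unique geodesic, so there is no ``pencil of geodesics sharing those two endpoints at infinity'' to vary through. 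Adding vertical translations does not repair this: the horizontal sections $S_h(m)\cap\{t=t_0\}$, $t_0\in(0,h)$, are circular arcs with the same ideal endpoints which bulge out towards mid-height and retreat again (the surface is a vertical bigraph, symmetric about $t=h/2$), so a vertical translate of $S_h(m)$ intersects $S_h(m)$ and the translated copies are not pairwise disjoint. Your fallback also does not apply directly, because $S_h(m)$ is not a vertical graph over a domain of $\hh$ (it is a bigraph), so the ``minimal graphs over mean-convex domains are area minimizing'' principle cannot be invoked as stated.

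The paper's fix is the correct choice of sweeping family: in the halfspace model the maps $f_s(x,y,t)=(sx,sy,t)$, $s>0$, are isometries of $d\wt{s}_\tau^2$ (hyperbolic translations along the geodesic with ideal endpoints $0$ and $\infty$, transverse to the base geodesic of $S_h(m)$), and the images $\{f_s(S_h(m))\}_{s>0}$ foliate the open slab $\{y>0,\ 0<t<h\}$. This is Lima's Lemma~6 in~\cite{Li}, and it rests exactly on the graphical circular-arc structure of the horizontal sections: at each fixed height the rescaled arcs sweep out the halfplane injectively, so distinct leaves are disjoint and every point of the slab lies on one. Equivalently, $S_h(m)$ is a Killing graph with respect to the scaling field, and the associated calibration gives area minimization; this property is preserved by the model isometry $\psi$, yielding the claim for $\cR_h(r)$. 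If you replace your two-parameter family by this one-parameter family (or verify the Killing-graph property directly), your argument becomes the paper's proof.
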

\begin{proof}
When $\tau =0$, the result follows immediately from
Proposition~2.1 of~\cite{setou}, hence we next assume
that $\tau >0$.
We follow the notation of Folha and Pe\~nafiel~\cite{fp1}, where
such tall rectangles were described. 
For the purpose of simplifying
the computations, we start our proof in the half space 
model for $\ekt$,
i.e., $$\ekt = \big(\{(x,y,t)\in \R^3\mid y>0\},d\wt{s}_\tau^2\big),$$
where
$$d\wt{s}_\tau^2 = 
\frac{1}{y^2}(dx^2+dy^2)+\left(-\frac{2}{y}\tau dx+dt\right)^2.$$

In this model, Corollary~5.1 of~\cite{fp1}
implies that
for $h>\pi\sqrt{1+4\tau^2}$ and $m>0$, there exists
a minimal plane $S_h(m)$ with asymptotic boundary given by
the rectangle
at $\{y=0\}$ with the four vertices
$(-m,0,0)$, $(-m,0,h)$, $(m,0,0)$ and $(m,0,h)$,
see Figure~\ref{figrec}. Furthermore,
$S_h(m)$ is invariant under the
one-parameter subgroup of isometries of the half space model
which is generated by the hyperbolic isometries of $\hn2$ that
fix the points at infinity corresponding to
the vertical segments of $\pai S_h(m)$.

Note that the family of
hyperbolic translations $\{f_s(x,y,t) = (sx,sy,t)\}_{s>0}$
are isometries of $d\wt{s}_\tau^2$. Hence,
the image surfaces $\{f_s(S_h(m))\}_{s>0}$ give a foliation
of the open slab $\{(x,y,t)\mid y>0, 0<t<h\}$ by
minimal surfaces. This was proved
by Lima~\cite[Lemma~6]{Li} and follows from the
fact that for any $t_0\in (0,h)$, the intersection
$S_h(m)\cap \{t=t_0\}$ is a graphical arc of circle
with endpoints $(-m,0,t_0)$ and $(m,0,t_0)$.
Hence, it follows that each
$S_h(m)$ is area minimizing.

To prove the existence of $\cR_h(r)$ as claimed, we
just use an isometry between models as we next present.
Let $z = x+iy$ be a complex coordinate system
for $\R^2_+= \{(x,y)\in\R^2\mid y>0\}$ and let
$\phi\colon\R^2_+\to \bbD$ be the Möbius transformation
given by $\phi(z) = \frac{z-i}{z+i}$. Then,
\begin{equation}\label{eq:mudvar}
\psi(x,y,t) = \psi(z,t) = \left(\phi(z),t+4\tau\arctan\left(\frac{x}{y+1}\right)\right)
\end{equation}
is an isometry between the two models
of $\ekt$, $(\R^2_+\times \R,d\wt{s}_\tau^2)$ and $(\bbD\times\R,ds_\tau^2)$.
For a given $r\in(0,\pi)$, take
$m = \frac{\sin(r)}{1+\cos(r)}$ and let $\cR_h(r) = \psi(S_h(m))$.
It is straightforward to see that $\cR_h(r)$ has the asymptotic
boundary as claimed.
\end{proof}
\begin{figure}
\centering
\includegraphics[width=0.8\textwidth]{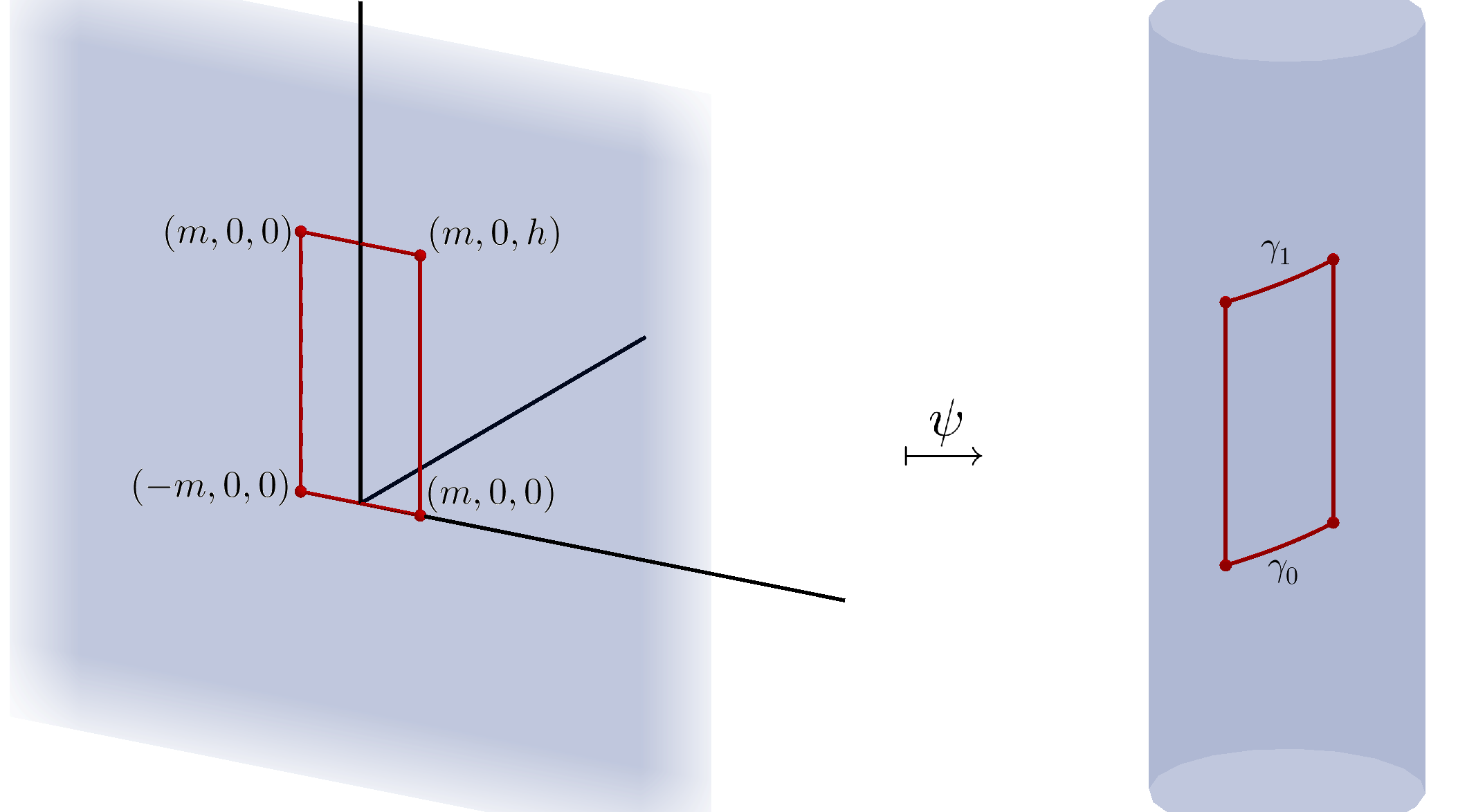}
\caption{$\pai S_h(m)$ is the rectangle in $\{y=0\}$
with vertices as above, and $\pai \cR_h(r)$
is the image of $\pai S_h(m)$ by $\psi$.\label{figrec}}
\end{figure}

The next result is a direct consequence of
Proposition~\ref{proptallrec}. We will make use of this result
in the proof of Theorem~\ref{thmmain}.
\begin{corollary}\label{corTallrec}
Given $t_1,t_2\in\R$ with $t_2>t_1+\pi\sqrt{1+4\tau^2}$,
there is $\de>0$ such that for any $\theta_1,\,\theta_2\in\sn1$
with $\abs{\theta_1-\theta_2}<\de$
there exists an area minimizing surface 
$\cR\subset \bbD\times(t_1,t_2)$ with
$\pai \cR\subset [\theta_1,\theta_2]\times(t_1,t_2)$.
\end{corollary}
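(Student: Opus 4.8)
The plan is to deduce this from Proposition~\ref{proptallrec} by a compactness/limiting argument on the aperture parameter $r$. First I would fix $t_1,t_2$ with $t_2 - t_1 > \pi\sqrt{1+4\tau^2}$ and set $h = t_2 - t_1$. By Proposition~\ref{proptallrec}, for every $r \in (0,\pi)$ there is an area minimizing tall rectangle $\cR_h(r)$, invariant under a one-parameter group of hyperbolic isometries, whose asymptotic boundary consists of the two curves $\g_0, \g_1$ (reparametrized over the arc $[-r,r] \subset \sn1$) together with the two vertical segments joining their endpoints. The curves $\g_0$ and $\g_1$ differ only by the vertical translation of height $h$ (the extra term $4\tau\arctan(\sin\theta/(1+\cos\theta))$ is common to both), so $\pai\cR_h(r)$ is contained in a vertical slab of height exactly $h = t_2 - t_1$; after composing with a vertical translation (an isometry of $\ekt$ by Proposition~\ref{prop-iso}) we may assume $\pai\cR_h(r) \subset \sn1 \times [t_1,t_2]$, and in fact the open part $\g_0, \g_1$ stays in $[t_1 + \eta_\tau, t_2 - \eta_\tau]$ where $\eta_\tau$ depends only on $\tau$ through $\sup|4\tau\arctan(\cdot)|$ over $\theta \in [-r,r]$, which tends to $0$ as $r \to 0^+$.

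Next I would use the maximum principle with the horizontal slices $\{t = t_1\}$ and $\{t = t_2\}$, which are area minimizing (hence minimal) planes: since $\pai\cR_h(r)$ lies strictly between these two slices, $\cR_h(r)$ itself is contained in the open slab $\bbD \times (t_1, t_2)$. So the slab confinement in the conclusion is automatic. It remains to control the horizontal extent, i.e. to show that for small $r$ the whole surface $\cR_h(r)$ projects (via $\pi_1$) into a thin sector. Here I would use vertical planes as barriers: a vertical plane is the $\pi_1$-preimage of a complete geodesic of $\hh$, and it is area minimizing. Choosing two complete geodesics of $\bbD$ with endpoints close to $\pm r$ on $\sn1$ — say the geodesic asymptotic to the two endpoints of the arc $[\theta_1,\theta_2]$ together with one more to bound the "back" side near the origin — one bounds a geodesically convex region of $\hh$ whose asymptotic boundary on $\sn1$ is exactly the closed arc $[\theta_1,\theta_2]$. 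The asymptotic boundary $\pai\cR_h(r)$ lies in $[\theta_1,\theta_2]\times\R$ for a suitable reparametrization, so by the maximum principle against these vertical-plane barriers, $\cR_h(r) \subset \pi_1^{-1}(\text{that convex region}) \subset [\theta_1,\theta_2]\times(t_1,t_2)$. This gives the claim with $\cR = \cR_h(r)$, where $\de$ is chosen (depending on $t_1,t_2,\tau$) so that the arc $[-r,r]$ maps, under the coordinates fixing $\pai\cR_h(r)$, into an arc of length $< \de$ — more directly, one takes $\de$ small, picks $r = r(\de)$ small enough that $\pai\cR_h(r)$ fits in an arc of length $\de$, and then any prescribed $[\theta_1,\theta_2]$ of that length works after applying a rotational isometry of $\ekt$ to move $\pai\cR_h(r)$ onto it.

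The main obstacle I expect is bookkeeping on the asymptotic boundary: the curves $\g_0,\g_1$ in Proposition~\ref{proptallrec} are written over the $\theta$-interval $[-r,r]$, but the actual arc of $\sn1$ swept, and its euclidean length as a function of $r$, must be matched against the target arc $[\theta_1,\theta_2]$, and one must verify that the extra twisting term $4\tau\arctan(\sin\theta/(1+\cos\theta))$ does not push $\pai\cR_h(r)$ outside the slab $(t_1,t_2)$ once the height is set to exactly $h = t_2 - t_1$ — this forces one to take $h$ slightly less than $t_2 - t_1$, or equivalently to shrink $r$, absorbing the twist into the margin; since that twist term $\to 0$ uniformly as $r \to 0$, this is harmless but needs to be stated carefully. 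A secondary point is making precise that the maximum-principle comparisons with the vertical and horizontal planes are legitimate for area minimizing (not merely minimal) surfaces with asymptotic boundary data, which is standard in this setting but should be invoked explicitly. Beyond that the argument is routine.
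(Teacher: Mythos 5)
Your proposal is correct and follows essentially the same route as the paper: apply Proposition~\ref{proptallrec} with a height strictly between $\pi\sqrt{1+4\tau^2}$ and $t_2-t_1$, choose $\de$ so small that the twist term $4\tau\arctan\bigl(\tfrac{\sin\theta}{1+\cos\theta}\bigr)$ stays within the resulting vertical margin, and then move the rectangle into place by a rotation about the $t$-axis and a vertical translation. The differences are cosmetic: the paper fixes $h=\tfrac{(t_2-t_1)+\pi\sqrt{1+4\tau^2}}{2}$ from the outset instead of starting with $h=t_2-t_1$ and correcting as you do in your last paragraph, and your vertical-plane barrier step is unnecessary since the statement only requires the angular confinement of $\pai\cR$, not of $\cR$ itself.
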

\begin{proof}
This proof follows from the
fact that rotations about the $t$-axis and vertical translations
in $\bbD\times\R$ are isometries of $ds^2_\tau$, as we next explain.
Let $t_1$ and $t_2$ be as stated and let
$$h = \displaystyle\frac{t_2-t_1+\pi\sqrt{1+4\tau^2}}{2}, \quad\text{and}\quad
\ve = \displaystyle\frac{h - \pi\sqrt{1+4\tau^2}}{2}.$$
Let $\de>0$ be such that for any $\theta\in (-\de,\de)$
it holds that 
$4\tau\left|{\arctan\left(\frac{\sin(\theta)}{1+\cos(\theta)}\right)}\right|<\ve$.
Then, if $\theta_1,\,\theta_2 \in \sn1$ are such that
$\abs{\theta_1-\theta_2}<\de$, we may vertically translate and
rotate the surface $\cR_h(\abs{\theta_2-\theta_1})$ to find $\cR$
as claimed.
\end{proof}

\section{Existence and nonexistence results.}\label{secproofs}

We next prove the main results of the paper.
In Section~\ref{secexist} we prove that for any
tall curve $\G\subset \pai\ekt$
there exists an area minimizing surface $\S\subset \ekt$ with
$\pai \S = \G$. 
In Section~\ref{secnonex}, we prove that for certain short curves $\G$
with $h(\G)<(\sqrt{1+4\tau^2}-4\tau)\pi$ there is no area 
minimizing surface $\S$ with $\pai \S = \G$.

Throughout this section,
for any $t\in \R$, we let $P_t = \bbD\times \{t\}$
denote the horizontal plane at height $t$ in $\ekt$.

\subsection{The proof of Theorem~\ref{thmmain}.}\label{secexist}

First, we prove the theorem when $\G$ is a finite union
of disjoint parallel circles,
$$\G = \bigcup_{i=1,\cdots, n}\sn1\times\{h_i\},$$
where
$h_{i+1}-h_i >\pi\sqrt{1+4\tau^2}$ for all
$i\in\{1,\,\ldots,\,n-1\}$.
Note that each $P_{h_i}$ separates and is area minimizing. 
Hence, to show that $\cup_{i=1}^n P_{h_i}$ is area minimizing,
it suffices to prove it is the unique minimal surface in $\ekt$
with asymptotic boundary $\G$. We prove this by showing that 
there is no connected minimal surface in $\ekt$ with 
asymptotic boundary $\G$, when $n\geq 2$.

Suppose to the contrary that there is a connected minimal
surface $\S$ in $\ekt$ with $\pai \S = \G$.
For two consecutive $h_i<h_{i+1}$,
let $\de>0$ be such that
$(h_{i+1}-\de)-(h_i+\de)>\pi\sqrt{1+4\tau^2}$
and let $H_i = h_i+\de$ and $H_{i+1}=h_{i+1}-\de$.
Let $U = \bbD\times[H_i,H_{i+1}]$ be
the slab bounded by the planes $P_{H_i}$ and
$P_{H_{i+1}}$. Then
$U\cap \S$ is a compact minimal
surface that admits a connected component 
$\wh{\S}$ with $\partial \wh{\S} \subset P_{H_i}\cup P_{H_{i+1}}$,
$\partial \wh{\S} \cap P_{H_i}\neq \es$
and $\partial \wh{\S} \cap P_{H_{i+1}}\neq \es$.

Let $\{M_d\}_{d>0}$ be the family of rotational catenoids of
$\ekt$ given in Section~\ref{sec-cat},
vertically translated so that for all $d>0$,
$\pai M_d \subset \sn1\times (H_i,H_{i+1})$.
To obtain a contradiction, we now just recall that
when $d$ goes to infinity, the surfaces $M_d$
escape from any compact, and when $d$ approaches zero,
they converge (away from the origin, with multiplicity two) to
a horizontal plane. In particular, there must be a first contact 
point between $\wh{\S}$ and some $M_d$, which is 
a contradiction by the maximum principle.

Hence, we next proceed with the proof of Theorem~\ref{thmmain}
with the additional assumption that $\G$ is not a family of
parallel circles.

\begin{proof}[Proof of Theorem~\ref{thmmain}]
We start the proof by setting up the notation.
For each $n\in \N$ and $h\in \R$, let
$D_n(h)$ be the disk in the horizontal plane $P_h$
centered at the origin and with euclidean radius
$\tanh(n)$. In
particular, the family $\{D_n(h)\}_{n\in\N}$ gives an exhaustion
of $P_h$.
Also, for a given $T>0$, let
$\Delta_n(T) = \cup_{-T\leq h \leq T} D_n(h)$
be a compact solid cylinder in $\ekt$.
Since both horizontal planes and vertical planes over
complete geodesics are minimal
surfaces in the metric of $\ekt$, $\D_n(T)$ is mean convex
for all $n\in\N$ and $T>0$.

Let $\G$ be a tall curve in $\pai\ekt$ and
let $T>0$ be such that
$\G$ is contained in the open slab $\sn1\times(-T,T)$ of
$\pai\ekt$. For each $n\in\N$, let
$\G_n \subset \partial \D_n(T)$ be the radial projection
of $\G$ in $\partial \D_n(T)$.
Since $\D_n(T)$ is mean convex and $\G_n$ is 
an embedded, piecewise smooth curve in $\partial \D_n(T)$,
there exists an embedded, possibly disconnected,
area minimizing
surface $\S_n\subset \D_n(T)$ with $\partial \S_n = \G_n$.
Our next argument is to show that when $n\to \infty$,
then, up to a subsequence,
$\S_n$ converges to a nonempty complete surface 
$\S\subset \ekt$ such that $\pai \S = \G$.

Since each $\S_n$ is area minimizing, 
the number of connected components of
$\S_n$ is uniformly bounded by the number of connected components
of $\G_n$, which is equal to the number of components of $\G$.
In particular, we may pass to a subsequence to assume that 
there exists some $k\in\N$ such that the number
of connected components of each $\S_n$ is $k$, and
we let $\S_n^1,\ldots,\S_n^k$ denote such components,
labeled in such a way that for each $i\in\{1,2,\ldots,k\}$
the radial projection of $\partial \S_n^i$ to $\pai \ekt$
correspond to the same component of $\G$ for all $n\in \N$.
In particular, we just need to prove the result
when $k = 1$, since the general case follows from a finite diagonal
argument. Hence, from now on
we will assume that $\S_n$ is
connected, for all $n\in\N$.

Let $\O = \pai\ekt\setminus\G$. Since
$\G$ is tall, Corollary~\ref{corTallrec}
gives that for any $q\in \O$ there exists a tall rectangle
$\cR_q$ such that
$\pai\cR_q$ is disjoint from $\G$ and separates
$q$ from $\G$ in $\pai\ekt$. Let $U_q \subset \ekt$
be the region defined by $\cR_q$ in $\ekt$ such that
$q\in \pai U_q$.

We claim that $\S_n \cap U_q = \es$,
for all $n$ sufficiently large.
In the topology of
$\ol{\bbD}\times\R$, $\ol{U_q}$ and $\G$ are two disjoint
compact sets, and
the sequence $\G_n$ converges to 
$\G$. Hence there is $n(q)>0$ such that
for all $n\geq n(q)$,
$\G_n\cap \ol{U_q} = \es$, from where it follows that
$\G_n\cap U_q = \es$ in $\ekt$.

To prove the claim, we argue by contradiction and assume that
$\S_n$ intersects $U_q$ for some $n\geq n(q)$.
Now, a standard replacement argument yields a contradiction.
In fact, since $\G_n = \partial \S_n$ does 
not intersect $U_q$, then $S = \S_n\cap \ol{U_q}$ is a compact
smooth surface with boundary in $\partial U_q = \cR_q$.
Since $\cR_q$ is a topological plane,
there exists a compact subdomain $\wh{S}\subset \cR_q$
with $\partial \wh{S} = \partial S$.
Then, from the fact that both $\S_n$ and $\cR_q$ 
are area minimizing, we obtain that
${\rm Area}(S) = {\rm Area}(\wh{S})$. In 
particular, the compact surface defined by 
$$\S_n' = (\S_n\setminus S)\cup \wh{S}$$
is a nonsmooth area minimizing surface, a contradiction.

Next, we use the fact proved above to show that
the sequence $(\S_n)_{n\in\N}$
admits a limit point in $\ekt$; in other words, the
surfaces $\S_n$ do not {\em escape to infinity}.
Since $\G$ is not a finite collection of parallel circles,
there exists a horizontal plane $P_h$ in $\ekt$ such that
$\pai P_h$ intersects $\G$ transversely at some point $p$.
Hence, we may choose points $p_1,\,p_2 \in \pai P_h$ that
bound a closed arc $[p_1,p_2]\subset \pai P_h$
containing $p$ in its interior and such that
$[p_1,p_2]\cap \G =\{p\}$, see Figure~\ref{figPh}~(a).
Let $\g$ be a complete arc in $P_h$ with endpoints $p_1,p_2$
and let $A\subset P_h$ be the region bounded by $\g$ in
$P_h$ that contains $p$ in its asymptotic boundary.
Since $\G_n$ converges to $\G$, it follows that
$\G_n$ intersects $A$ transversely and only in one point,
for all $n$ sufficiently large.

The above argument shows that, when we consider
$[p_1,p_2]\cup \g$ as a simple closed curve in $\ol{\bbD}\times \R$,
the linking number between $[p_1,p_2]\cup \g$ and $\G_n$ is one,
for all $n$ sufficiently large. In particular, 
since $\partial \S_n = \G_n$, there must be
a point $q_n\in \g\cap \S_n$.
\begin{figure}
\centering
\begin{minipage}{0.48\textwidth}
\begin{center}
\includegraphics[width=0.85\textwidth]{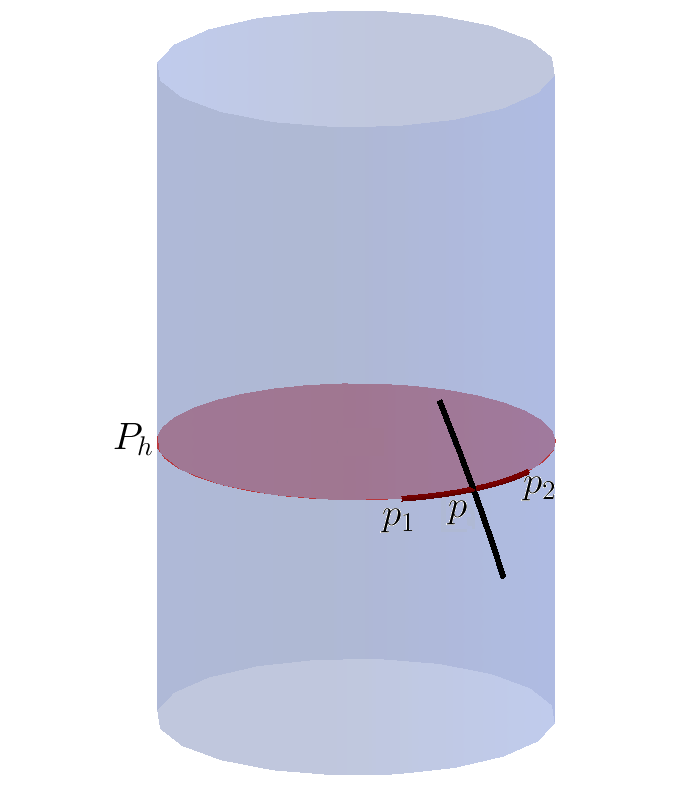}

(a)
\end{center}
\end{minipage}
\begin{minipage}{0.48\textwidth}
\begin{center}
\includegraphics[width=0.85\textwidth]{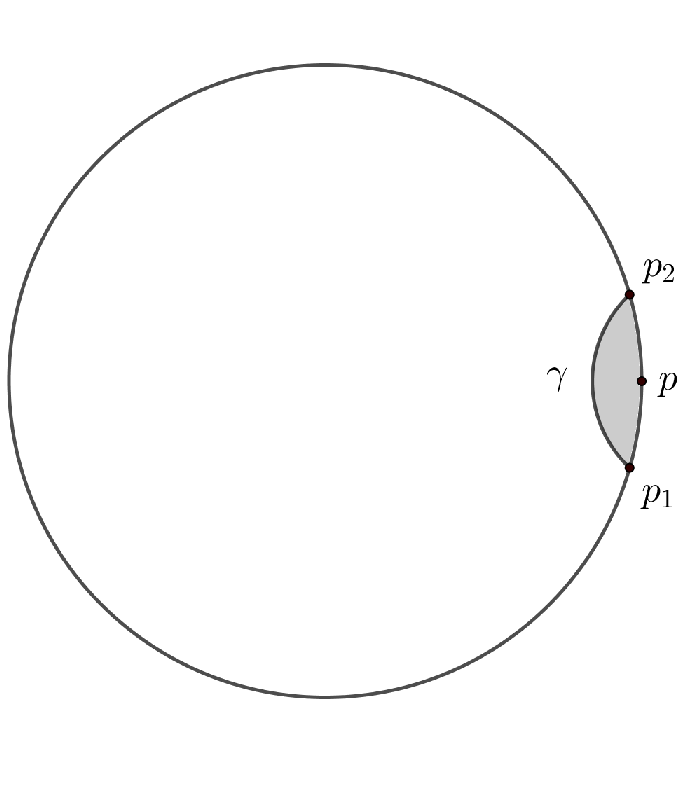}

(b)
\end{center}
\end{minipage}
\caption{
(a) shows the plane $P_h$ intersecting $\G$ transversely at $p$
and the arc $[p_1,p_2]\subset \pai P_h$.
In (b) we have the arc $\g\subset P_h$ and, highlighted, 
the region $A$.\label{figPh}}
\end{figure}

Let $U_1 = U_{p_1}$ and $U_2 = U_{p_2}$ be the respective 
regions bounded by two tall rectangles $\cR_{p_1}$
and $\cR_{p_2}$ as before. Then, for $n$ sufficiently
large, $\S_n\cap\left(U_1\cup U_2\right) = \es$.
Since $\g\setminus (U_1\cup U_2)$ is compact,
the sequence $\{q_n\}_{n\in\N}$ admits a convergent subsequence,
and then the surfaces $\S_n$ do not
escape to infinity. In particular, after passing
to a subsequence, it follows that $\S_n$ converges
(in the $C^{2,\alpha}$ topology on compacts of $\ekt$)
to a complete, area minimizing surface $\S\subset \ekt$.

It remains to prove that
$\pai \S = \G$. First, note that the fact that for any $p\in \O$
there exists $n(p)\in\N$ such that $\S_n \cap U_p = \es$
for all $n\geq n(p)$ gives immediately that $\pai \S \subset \G$.
Next, we show that given $p\in \G$, then $p\in \pai \S$.
First, assume that there is a plane $P_h\subset \ekt$ such that
$\pai P_h$ intersects $\G$ transversely at $p$. 
Take a sequence of
arcs $\g_n\subset P_h$ (each $\g_n$ resembles the arc $\g$ in
Figure~\ref{figPh} (b)) such that
the endpoints of $\g_n$ determine arcs in $\pai P_h$ that
intersect $\G$ uniquely at $p$ and such that
the respective regions $A_n\subset P_h$ bounded by $\g_n$
satisfy that $A_{n+1}\subset A_n$ and that 
$\cap_{n\in\N} \ol{A_n} = \{p\}$. The same arguments as above
give that for all $n\in \N$ there exists a point 
$q_n \in \S\cap \g_n$, from where it follows that 
$p = \lim_{n\to \infty} q_n \in \pai \S$.
Since the above argument is purely topological,
we notice that the general case when
the $t$-coordinate of $\G$ has a local extremal value 
at $p$ can be treated
in a similar manner, by considering a vertical plane instead of a 
horizontal one, and this finishes the proof of Theorem~\ref{thmmain}.
\end{proof}

\subsection{The proof of Theorem~\ref{thmnonexist}.}\label{secnonex}

In this section, we prove our nonexistence result
stated as Theorem~\ref{thmnonexist} in the Introduction.
The proof follows the ideas contained in Step~2 of the proof of
Theorem~2.13 in~\cite{cosk1}, with a few 
changes and necessary 
adaptations to the $\tau\neq 0$ setting.
A key step in the proof of our result is,
when $\tau \neq 0$, to show the existence of
a compact, connected area minimizing surface in $\ekt$
with boundary contained in two parallel planes that
are sufficiently far from each other. This
is stated in Proposition~\ref{propCatenoids} below 
and is proved in Section~\ref{secproof}, since
the arguments used in its proof are technical.

To what follows,
for each $t\in \R$ and $r>0$, we let
$$C_t(r) = \{(\tanh(r)\cos(u),\tanh(r)\sin(u),t)\mid u\in [0,2\pi)\}$$
be the circle in the horizontal plane $P_t$ (with coordinates 
given by the open disk $\bbD$)
centered at the origin with euclidean 
radius $\tanh(r)\in(0,1)$.

\begin{proposition}\label{propCatenoids}
For any $h\in(0,\frac{\pi}{2}\sqrt{1+4\tau^2})$
there exist $R>0$ and a compact, connected, 
area minimizing surface $S(h) \subset \ekt$
such that
$$\partial S(h) = C_h(R)\cup C_{-h}(R).$$
\end{proposition}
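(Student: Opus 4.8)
The plan is to build $S(h)$ out of two pieces of rotational catenoids from Section~\ref{sec-cat}, glued along a circle, and then argue that the resulting compact surface is area minimizing by exhibiting it as a leaf of a minimal foliation (or, more simply, by a calibration-type/replacement argument using the barriers already at our disposal). Recall that for each $d>0$ the catenoid $M_d = M_d^+\cup M_d^-$ has asymptotic boundary $\sn1\times\{-{\bf h}(d)\}\cup\sn1\times\{{\bf h}(d)\}$, that ${\bf h}\colon(0,\infty)\to(0,\tfrac{\pi}{2}\sqrt{1+4\tau^2})$ is increasing with $\lim_{d\to0^+}{\bf h}(d)=0$ and $\lim_{d\to\infty}{\bf h}(d)=\tfrac{\pi}{2}\sqrt{1+4\tau^2}$, and that each half $M_d^\pm$ is parameterized over $r\in[\arcsinh(d),\infty)$ with the neck circle $C_{\arcsinh(d)}(\,\cdot\,)$ lying in $P_0$. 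So first I would fix $h\in(0,\tfrac{\pi}{2}\sqrt{1+4\tau^2})$; by the intermediate value theorem there is a (unique) $d = d(h)>0$ with ${\bf h}(d)>h$, and in fact I want to choose $d$ large enough that ${\bf h}(d)>h$ strictly. Then on the upper half $M_d^+$, the function $u_d$ is a continuous, strictly increasing bijection from $[\arcsinh(d),\infty)$ onto $[0,{\bf h}(d))$, so there is a unique $r_0 = r_0(h)>\arcsinh(d)$ with $u_d(r_0)=h$. Set $R = R(h) = \operatorname{arctanh}(\tanh(r_0/2))$ — equivalently the euclidean radius $\tanh(r_0/2)$ — and take
$$
S(h) \;=\; \{(\tanh(r/2)\cos\theta,\tanh(r/2)\sin\theta,\pm u_d(r)) \mid r\in[\arcsinh(d),r_0],\ \theta\in[0,2\pi)\}.
$$
This is the portion of the catenoid $M_d$ between heights $-h$ and $h$; it is compact, connected (the two halves meet along the neck circle in $P_0$), smooth, minimal, and by construction $\partial S(h) = C_{h}(R)\cup C_{-h}(R)$ since $\tanh(r_0/2)$ is exactly the euclidean radius $\tanh(R)$ appearing in the definition of $C_{\pm h}(R)$. (If one prefers $R$ to be a hyperbolic-distance parameter, just relabel; the statement only records the euclidean radius through $\tanh(R)$, so any consistent convention works.)

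It remains to show $S(h)$ is \emph{area minimizing}, not merely minimal. The cleanest route is foliation by minimal surfaces: I would like to exhibit a foliation of the compact slab $\ol{\bbD}\times[-h,h]$ — or at least of a neighborhood of $S(h)$ inside it — by minimal surfaces, one leaf of which is $S(h)$, since a surface that is a leaf of such a foliation is automatically area minimizing in its homology class. The natural candidate foliation is $\{$suitably truncated vertical translates of the $M_{d}$, $d>0$, together with the two horizontal planes $P_{\pm h}$ as boundary leaves$\}$: as $d$ ranges over $(0,\infty)$ the necks of the $M_d$ sweep out $P_0$ away from the origin with multiplicity two in the limit $d\to0^+$, and the $M_d$ escape every compact set as $d\to\infty$, so the family of catenoids $M_d$ (restricted to $|t|\le h$) is expected to foliate the complement of the $t$-axis in $\bbD\times(-h,h)$; adding $P_0$ handles the axis in the limit. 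Then $S(h)$, being the $|t|\le h$ part of $M_{d(h)}$, is a leaf, hence area minimizing. Alternatively — and this avoids having to verify the global foliation claim carefully — one can run the replacement argument used elsewhere in the paper: any compact surface $\Sigma$ with $\partial\Sigma = C_h(R)\cup C_{-h}(R)$ can be assumed by a cut-and-paste to be disjoint from the $t$-axis (using $P_0$ and vertical planes as barriers to push it off), and then the one-parameter family $\{M_d\}$ provides a first-contact barrier forcing $\operatorname{Area}(\Sigma)\ge\operatorname{Area}(S(h))$ by the maximum principle, exactly as in the proof of Theorem~\ref{thmmain}; combined with the fact that $S(h)$ itself is one admissible competitor, minimality of the area follows.

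The main obstacle I expect is the area-minimizing claim rather than the construction, and specifically verifying that the truncated catenoids $\{M_d\}_{d>0}$ really do foliate $\bbD\times(-h,h)$ minus the axis (monotonicity of the family in $d$, no two catenoids crossing, and the correct degenerations as $d\to0^+$ and $d\to\infty$). This monotonicity should follow from the explicit formula~\eqref{defud} for $u_d$ — the integrand is monotone in $d$ in the relevant way — but it is precisely the kind of technical computation the statement defers to Section~\ref{secproof}, which is why Proposition~\ref{propCatenoids} is proved there. A secondary point to be careful about: one must check $S(h)$ is embedded and that the two halves $M_d^\pm$ glue $C^1$ (in fact smoothly) across the neck circle in $P_0$, which is standard for these rotational catenoids but should be noted. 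Everything else — compactness, connectedness, the identification of $\partial S(h)$ with $C_{\pm h}(R)$, and the range $h\in(0,\tfrac{\pi}{2}\sqrt{1+4\tau^2})$ matching the range of ${\bf h}$ — is immediate from the properties of $u_d$ and ${\bf h}$ recalled in Section~\ref{sec-cat}.
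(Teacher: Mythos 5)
Your construction of the truncated catenoid and the identification of its boundary with $C_h(R)\cup C_{-h}(R)$ is fine, but the heart of the statement --- that the surface you exhibit is \emph{area minimizing} --- is exactly what remains unproved, and neither of your two justifications works. First, the family $\{M_d\}_{d>0}$ is not a foliation, even away from the $t$-axis and after truncating at $|t|\le h$: for $d_1<d_2$, at $r=\arcsinh(d_2)$ one has $u_{d_1}(r)>0=u_{d_2}(r)$, while as $r\to\infty$ one has $u_{d_1}(r)\to{\bf h}(d_1)<{\bf h}(d_2)$, so the upper halves of $M_{d_1}$ and $M_{d_2}$ must cross along some circle. Equivalently, for fixed $r_0$ the function $d\mapsto u_d(r_0)$ vanishes at both ends of $(0,\sinh r_0]$, so through a point off $P_0$ there pass either two catenoids of the family or none (the family has an envelope, just as for coaxial catenoids in $\R^3$); hence no calibration argument via a minimal foliation is available. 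Second, the ``alternative'' argument is not an argument: a first-contact/maximum-principle barrier can only rule out a tangency, it can never produce the inequality ${\rm Area}(\Sigma)\ge{\rm Area}(S(h))$, and the preliminary claim that any competitor can be pushed off the $t$-axis by cut-and-paste is also unjustified. In fact the paper never claims (and does not need) that the truncated catenoid itself is area minimizing --- quite possibly it is not.

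What the paper actually proves is different and quantitative. It defines $S(h)$ as an area minimizer with boundary $\partial M_d^h$ (existence is standard, e.g.\ solving Plateau in the mean convex cylinders used in the proof of Theorem~\ref{thmmain}), and the whole content is \emph{connectedness} of that minimizer: since the only disconnected minimal surface with this boundary is the pair of horizontal disks $D_1(R)\cup D_2(R)$, it suffices to show the truncated catenoid, used merely as a competitor, has strictly smaller area than the two disks. This is done by the explicit estimates of Lemmas~\ref{lemmaepsilon} and~\ref{lemma4}, which require taking $d$ large and the specific truncation $R(d)=\frac32\log d$; Lemma~\ref{finalprop} then shows $u_d(R(d))\to\frac{\pi}{2}\sqrt{1+4\tau^2}$, so every $h$ in the stated range is attained. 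Note that your choice of $d$ (the smallest $d$ with ${\bf h}(d)>h$; by the way, such $d$ is not unique) could not feed into such a comparison. The missing idea in your proposal is precisely this area comparison between the catenoid piece and the two disks, together with the realization that one should minimize with the given boundary and prove the minimizer connected, rather than try to show the catenoid itself minimizes.
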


Assuming Proposition~\ref{propCatenoids},
we now proceed to the proof of Theorem~\ref{thmnonexist}.

\begin{proof}[Proof of Theorem~\ref{thmnonexist}]
Arguing by contradiction, let us assume that
$\G$ is a curve as stated and that
$\S$ is a complete, connected, area minimizing
surface in $\ekt$ such that
$\pai \S = \G$. Since $\S$ is area minimizing,
then $\S$ is properly embedded. In particular,
$\S$ is orientable and the fact that
$\S$ is connected implies that it separates 
$\ekt$ into two connected open regions $E_1,\,E_2$.

The asymptotic boundaries of $E_1$ and $E_2$ intersect along $\G$
and their union is the whole $\pai\ekt$. In particular,
if we let $\O_1 = {\rm int} (\pai E_1)$ and $\O_2 = {\rm int}(\pai E_2)$,
it follows that $\pai\ekt \setminus \G = \O_1\cup \O_2$.

After rotating $\S$ about the $t$-axis and
performing a vertical translation, 
the assumptions over $\G$ imply that there exist $\delta >0$ and
$T\in(0,\frac{\pi}{2}(\sqrt{1+4\tau^2}-4\tau))$ such that,
for all $\theta \in(-\de,\de)$ the
vertical segment $\g_{\theta} = \{e^{i\theta}\} \times[-T,T]$
intersects $\G$ transversely, in exactly two points, and both
points are interior to $\g_\theta$.
We may also reindex to assume, without loss of generality,
that $I_\de \times \{-T,T\}\subset \O_1$,
where $I_\de = \{e^{i\theta}\in \sn1 \mid \theta \in(-\de,\de)\}$;
see Figure~\ref{figshortc}~(a).

Let $V_1,\,V_2$ be open sets in $\ekt$
such that $\ol{V_1},\,\ol{V_2}\subset E_1$
and that 
$I_\de \times \{T\}\subset {\rm int }(\pai V_1)$,
$I_\de \times \{-T\}\subset {\rm int }(\pai V_2)$.
Also, let $V_3$ be another open set in $\ekt$
such that $\ol{V_3}\subset E_2$ and that
$\pai V_3 \subset \O_2$ with $(1,0)\in {\rm int}(\pai V_3)$.
For instance, we could take $V_1,\,V_2,\,V_3$ as sufficiently small
neighborhoods of $I_\de\times\{T\},\,I_\de\times\{-T\}$
and of $(1,0)$, respectively (see
Figure~\ref{figshortc}~(b)).

\begin{figure}
\noindent
\begin{minipage}{0.3\textwidth}
\begin{center}
\includegraphics[width=0.8\textwidth]{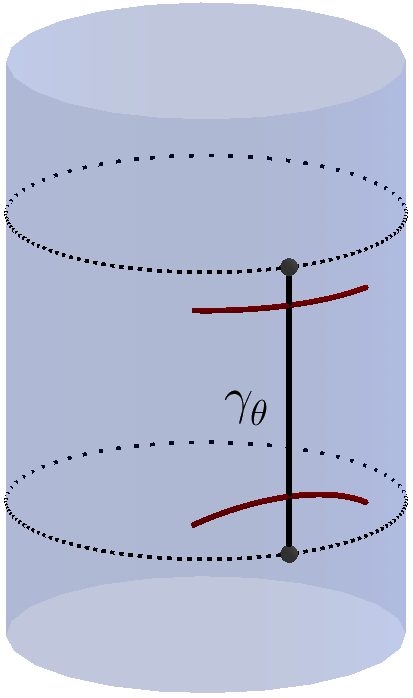}

(a)
\end{center}
\end{minipage}
\hfill
\begin{minipage}{0.3\textwidth}
\begin{center}
\includegraphics[width=0.8\textwidth]{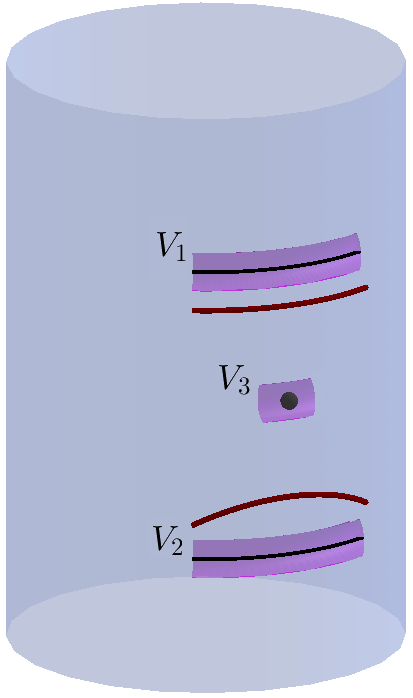}

(b)
\end{center}
\end{minipage}
\hfill
\begin{minipage}{0.3\textwidth}
\begin{center}
\includegraphics[width=0.8\textwidth]{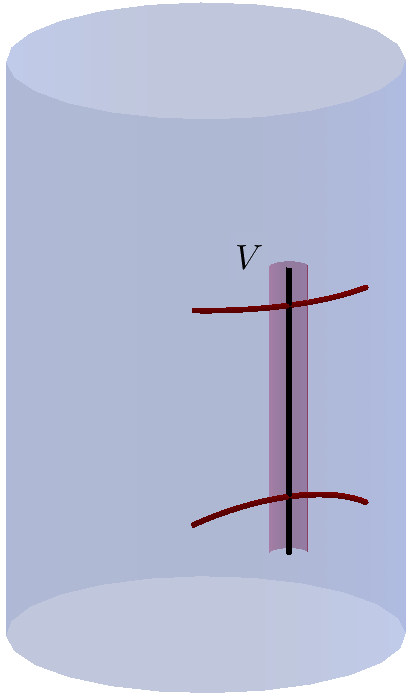}

(c)
\end{center}
\end{minipage}
\caption{In (a) we see one of the arcs $\g_\theta$ that has endpoints
in the circles $\sn1\times\{-T\},\,\sn1\times\{T\}$ and intersects
$\G$ transversely in two interior points.
In (b), we depict the neighborhoods $V_1,\,V_2$ and $V_3$ and
(c) shows the product neighborhood $V$ as a neighborhood of $\g_0$.
\label{figshortc}}
\end{figure}

Let, for $i=1,2,3$, $U_i = \pi_1(V_i)\subset \bbD$. Then
$U = U_1\cap U_2 \cap U_3$ is an open set of $\bbD$ that contains
$1$ in the interior of its asymptotic boundary.
Let $V = U\times (-T,T)\subset \ekt$ (see Figure~\ref{figshortc}~(c)).

From equation~\eqref{eq:boundheight}, we may
choose $h<\frac{\pi}{2}\sqrt{1+4\tau^2}$ such 
that $T<h-2\tau\pi$, and then 
Proposition~\ref{propCatenoids} implies the existence of
$R>0$ and of a connected, area minimizing surface $S = S(h)$
with boundary $C_h(R)\cup C_{-h}(R)$.
As before, let $D_t(R)$ denote the disk in $P_t$
centered at the
origin and with euclidean radius $\tanh(R)$.
By the maximum principle using horizontal and vertical
planes, we know that
$S\subset \cup_{t\in[-h,h]}D_t(R)$.
Furthermore, $\wh{S} = S\cup D_{-h}(R)\cup D_h(R)$
is a connected, embedded, compact surface in $\ekt$;
then $\wh{S}$ separates
$\ekt$ and defines a unique bounded region $A\subset \ekt$
with $\partial A = \wh{S}$.

For $r>0$, let $\varphi_r\colon \bbD\to\bbD$ be
the hyperbolic isometry of $\hn2$ that translates along
the geodesic given by the real axis and maps $0$ to $\tanh(r)$,
and let $\phi_r\colon \ekt\to \ekt$ be
its related isometry of $\ekt$ given by Corollary~\ref{cor-iso}.
Then,
for all $z\in\bbD$ and $t\in\R$,
$\pi_1(\phi_r(z,t)) = \varphi_r(z)$ and
\begin{equation}\label{eqarg}
\abs{\pi_2(\phi_r(z,t))-t} < 2\tau \pi.
\end{equation}

Let $S_r = \phi_r(S)$ and $A_r = \phi_r(A)$.
Then
$S_r$ is an area minimizing surface of $\ekt$ contained
in the boundary of the region $A_r$.
Let $D(R) = \pi_1(D_t(R))$ for $t\in \R$.
Since, when $r\to \infty$, $\varphi_r(D(R))$ is collapsing into 
the point at
infinity $z = 1$ and $U$ is an open set which contains 
$1$ in the interior of its asymptotic boundary, there exists 
$r_0>0$ such that for all $r\geq r_0$, $\pi_1(A_r)\subset U$.

Note that~\eqref{eqarg}, together with the
fact that $h-2\tau\pi >T$, gives that $\partial A_{r_0}$
intersects both regions $\bbD\times (-\infty,-T)$ and
$\bbD\times (T,+\infty)$. In particular, since
$\pi_1(A_{r_0})\subset U$ and $A_{r_0}$ is connected,
there exists a connected component $A_0$
of $A_{r_0}\cap V$ with boundary intersecting both 
$P_T\cap V_1$ and $P_{-T}\cap V_2$. In particular, 
$A_0$ intersects $E_1$.

On the other hand,
$V_3\cap V$ separates $V$ into two connected components that
intersect $A_0$, and then $V_3\cap A_0\neq \es$,
from where it follows that $A_0$ also intersects $E_2$
and that $A_0\setminus \S$ contains a compact
connected component with boundary contained
in $S_{r_0}\cup \S$.
Now, a standard replacement argument 
using that both $S_{r_0}$ and $\S$ are area minimizing
produces a nonsmooth area minimizing surface, which
is a contradiction that proves Theorem~\ref{thmnonexist}.
\end{proof}

\section{The proof of Proposition~\ref{propCatenoids}.}\label{secproof}

The proof of Proposition~\ref{propCatenoids},
which follows the ideas
presented in~\cite[Lemma~7.1]{cosk1},
will be carried out along
this section. For $d>0$, let $M_d$ be 
the rotational catenoid introduced in Section~\ref{sec-cat}.
The main idea here is
to show that for any $h\in(0,\frac{\pi}{2}\sqrt{1+4\tau^2})$
and $d$ sufficiently large, the surface
$$
M_d^h = M_d\cap (\bbD\times [-h, h])
$$
has less area than the union of the 
two disks in the parallel planes $P_{-h},P_h$
which share a boundary component with $M_d^h$.
Hence, the area minimizing surface 
with boundary $\partial M_d^h$ is necessarily connected.
Note that $M_d^h$ is compact if and only if 
$h <{\bf h}(d)$, which is equivalent to the existence of
$R>\arcsinh(d)$ such that $u_d(R) = h$.
For convenience,
for given $d>0$ and $R>\arcsinh(d)$, we define
$$M_d(R) = M_d^{u_d(R)} = M_d\cap\{-u_d(R)\leq t\leq u_d(R)\}.$$

Our first result is an area estimate for $M_d(R)$ for large
values of $d$. 
\begin{lemma}
\label{lemmaepsilon}
There exists $d_0>0$ such that
for any $d\geq d_0$ and $R>\arcsinh(d+1)$ it holds that
$${\rm Area}(M_d(R))
<2\pi\sqrt{1+4\tau^2}\left(\sqrt{e^{2R}-2-4d^2}+1\right).$$
\end{lemma}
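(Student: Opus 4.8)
\textbf{Proof proposal for Lemma~\ref{lemmaepsilon}.}
The plan is to compute the area of the rotational catenoid piece $M_d(R)$ directly from its parametrization and then bound the resulting integral. Recall that $M_d^\pm$ is parametrized by $(\tanh(r/2)\cos\theta,\tanh(r/2)\sin\theta,\pm u_d(r))$ for $r\in[\arcsinh(d),\infty)$ and $\theta\in[0,2\pi)$, where $u_d$ is given by~\eqref{defud}. First I would write down the induced metric on $M_d^+$ in the coordinates $(r,\theta)$ using the expression~\eqref{metriccyl} for $ds^2_\tau$. Since the surface is rotationally invariant, the metric will be diagonal (or close to it) in these coordinates, with the $\theta\theta$-component essentially $\lambda^2\tanh^2(r/2)$ coming from the horizontal part, possibly corrected by the connection term $2\tau\lambda(y\,dx-x\,dy)$; one checks that on a circle of constant $r$ this term contributes a factor involving $\tanh^2(r/2)$ as well. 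The $rr$-component will combine $(\partial_r \tanh(r/2))^2\lambda^2$ with $(u_d'(r))^2$ and, again, a contribution from the fibration term. The key simplification is that $u_d'(r)^2 = d^2\frac{1+4\tau^2\tanh^2(r/2)}{\sinh^2 r - d^2}$ by~\eqref{defud}, so the area element $\sqrt{\det g}\,dr\,d\theta$ should simplify substantially.

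Next I would integrate over $\theta\in[0,2\pi)$ (trivial, giving a factor $2\pi$) and then over $r\in[\arcsinh(d),R]$, and double the result to account for both halves $M_d^+$ and $M_d^-$. After the algebra, I expect the integrand to take the form $2\pi\sqrt{1+4\tau^2}$ times something like $\frac{\sinh r\cosh(r/2)\,\text{(stuff)}}{\sqrt{\sinh^2 r - d^2}}\,dr$, where the "stuff" is bounded; the appearance of the factor $\sqrt{1+4\tau^2}$ is forced because the numerator $\sqrt{1+4\tau^2\tanh^2(r/2)}$ from $u_d'$ is bounded above by $\sqrt{1+4\tau^2}$ uniformly in $r$. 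After this bound, the remaining integral should be comparable to $\int_{\arcsinh(d)}^R \frac{\cosh r}{\sqrt{\sinh^2 r - d^2}}\,dr$, which is an elementary integral: substituting $w=\sinh r$ gives $\int \frac{dw}{\sqrt{w^2-d^2}} = \arccosh(w/d)$ type expressions, but more useful here is that $\sqrt{\sinh^2 r - d^2}$ evaluated at $r=R$ behaves like $\frac{1}{2}\sqrt{e^{2R}-2-4d^2}$ for large $R$ (using $\sinh^2 R = \frac{1}{4}(e^R-e^{-R})^2 = \frac{1}{4}(e^{2R}-2+e^{-2R})$), which is exactly the quantity appearing on the right-hand side.

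The requirement $d\geq d_0$ and $R>\arcsinh(d+1)$ is what lets me control the parts of the integrand that are \emph{not} captured by the naive bound — in particular, near the lower limit $r=\arcsinh(d)$ the factor $1/\sqrt{\sinh^2 r - d^2}$ blows up, but the integral still converges (as already noted after~\eqref{defud}, $u_d$ extends continuously), and for $d$ large the contribution of a neighborhood of $r=\arcsinh(d)$ is uniformly small; away from that endpoint, for $r$ bounded below by $\arcsinh(d+1)$, one has $\sinh^2 r - d^2 \geq (d+1)^2\cdot\frac{\sinh^2 r}{\text{something}} - d^2$, giving a clean lower bound. I would split the integral at $r = \arcsinh(d+1)$: on $[\arcsinh(d),\arcsinh(d+1)]$ bound the area contribution by a constant (absorbed into the "$+1$" in the statement once $d_0$ is chosen large enough), and on $[\arcsinh(d+1),R]$ use $\tanh(r/2)<1$ and the elementary antiderivative to get the leading term $2\pi\sqrt{1+4\tau^2}\sqrt{e^{2R}-2-4d^2}$.

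The main obstacle I anticipate is the metric computation itself: because we are in $\ekt$ rather than $\hnr$, the Riemannian submersion term $(2\tau\lambda(y\,dx-x\,dy)+dt)^2$ interacts with both the $dr$ and the $d\theta$ directions of the rotationally symmetric surface, so the induced metric is not simply "horizontal part plus $(u_d')^2 dr^2$" as it would be for $\tau=0$. I would need to carefully verify that the cross terms either vanish by symmetry (the surface is invariant under $\theta\mapsto-\theta$ composed with an isometry, which should kill $g_{r\theta}$) or combine with the diagonal terms in a way that still yields the clean factor $\sqrt{1+4\tau^2}$ after taking the square root of the determinant. Pe\~nafiel's paper~\cite{p1}, from which the $M_d$ are taken, presumably already records the first fundamental form or at least enough to reconstruct it, so I would lean on those formulas rather than redo the submersion geometry from scratch. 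Once the correct area element is in hand, everything else is the elementary estimate sketched above.
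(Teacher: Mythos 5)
Your overall strategy (compute the area element of the rotational parametrization, pull out the factor $\sqrt{1+4\tau^2}$ via $\tanh^2(r/2)<1$, then estimate an elementary integral after splitting at $r=\arcsinh(d+1)$) is the same as the paper's, and the metric computation you are worried about does come out cleanly: the paper records it as \eqref{eq:areageneral}, which combined with \eqref{defud} gives
${\rm Area}(M_d(R)) = 4\pi\int_{\arcsinh(d)}^{R}\sinh^2(s)\sqrt{\tfrac{1+4\tau^2\tanh^2(s/2)}{\sinh^2(s)-d^2}}\,ds$;
your guessed integrand with the extra factor $\cosh(r/2)$ is not correct, but since you planned to take the exact first fundamental form from \cite{p1}, that is a side issue.

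The genuine gap is your treatment of the neck piece $[\arcsinh(d),\arcsinh(d+1)]$. You claim its contribution is bounded by a constant, ``uniformly small'' for $d$ large, and can be absorbed into the $+1$. That is false: substituting $w=\sinh s$, this contribution equals $\int_d^{d+1}\frac{w^2}{\sqrt{(w^2-d^2)(1+w^2)}}\,dw \approx d\,\log\!\left(\frac{d+1+\sqrt{2d+1}}{d}\right)\sim \sqrt{2d}$, so it \emph{diverges} like $\sqrt{d}$ as $d\to\infty$ (geometrically, the band has slant width of order $\sqrt{2/d}$ but circumference of order $2\pi d$). Consequently, if on the tail $[\arcsinh(d+1),R]$ you keep only the leading term $\tfrac12\sqrt{e^{2R}-2-4d^2}$, as your sketch does, your total bound is $\tfrac12\sqrt{e^{2R}-2-4d^2}+O(\sqrt{d})$, which does not give the stated inequality for large $d$. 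The lemma holds only because of a cancellation your sketch discards: evaluating the tail antiderivative at the \emph{lower} limit $\arcsinh(d+1)$ produces a negative term of size $\sim\sqrt{2d}$ (the $-\tfrac{\sqrt{8d+3}}{2}$ in \eqref{estimateI2}), which asymptotically cancels the $\sim\sqrt{2d}$ neck term \eqref{estimateI1}; the fact that this difference tends to $0$ is precisely where $d_0$ comes from, not from the neck term being small. Your own antiderivative $\sqrt{\sinh^2 r-d^2}$ would supply the needed compensating term ($-\sqrt{2d+1}$ at $r=\arcsinh(d+1)$), so the argument is repairable, but as written the key step is missing and the ``absorb into $+1$'' claim is wrong.
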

\begin{proof}
Let $u\colon[a,b]\to\R$ be a smooth function and let
$\S$ be the rotational surface in $\ekt$ parameterized by
$$\S = \left\{(\tanh\left(r/2\right)\cos(\theta),
\tanh\left(r/2\right)\sin(\theta), u(r)) 
\mid r\in [a,b],\,\theta \in[0,2\pi)\right\}.$$
Then, a straightforward computation implies that the area of $\S$ is
given by
\begin{equation}\label{eq:areageneral}
{\rm Area}(\S)
= 2\pi\int_{a}^{b}
\sinh(s)\sqrt{(u'(s))^2+1+4\tau^2\tanh^2\left(s/2\right)}ds.
\end{equation}

Hence, it follows from~\eqref{eq:areageneral} and from~\eqref{defud}
that
$$
{\rm Area}(M_d(R)) =
4\pi\int_{\arcsinh(d)}^R
\sinh^2(s)\sqrt{\frac{1+4\tau^2\tanh^2\left(s/2\right)}
{\cosh^2(s)-1-d^2}}ds.$$
In particular, since $\abs{\tanh(x)}<1$ for all $x\in\R$, we obtain that
\begin{equation}\label{eqarea1}
{\rm Area}(M_d(R)) <
4\pi\sqrt{1+4\tau^2}\int_{\arcsinh(d)}^R
\frac{\sinh^2(s)}
{\sqrt{\cosh^2(s)-1-d^2}}ds.
\end{equation}

The next argument presents an adequate estimate
to the integral in~\eqref{eqarea1}, which we will denote by $I$.
Under the assumption that $R>\arcsinh(d+1)$, we may write
$I = I_1+I_2$ where
$$I_1
=
\int_{\arcsinh(d)}^{\arcsinh(d+1)}
\frac{\sinh^2(s)}
{\sqrt{\cosh^2(s)-1-d^2}}ds,\quad
I_2 =
\int_{\arcsinh(d+1)}^R
\frac{\sinh^2(s)}
{\sqrt{\cosh^2(s)-1-d^2}}ds.$$

To estimate $I_1$, we first use that $s<\arcsinh(d+1)$, obtaining
$$
I_1 \leq (d+1)
\int_{\arcsinh(d)}^{\arcsinh(d+1)}
\frac{\sinh(s)}
{\sqrt{\cosh^2(s)-1-d^2}}ds.
$$
Next, we use the change of variables $u = \cosh(s)$,
the identity
$\cosh(\arcsinh(x)) = \sqrt{1+x^2}$
and the fact
that for any $a \in \R$ the function $\log(\sqrt{x^2-a}+x)$
is a primitive to $\frac{1}{\sqrt{x^2-a}}$
to obtain that
\begin{eqnarray}
I_1
& \leq & 
(d+1)
\log\left(\frac{\sqrt{1+2d}+\sqrt{2+2d+d^2}}
{\sqrt{1+d^2}}\right)\nonumber\\
& < & 
(d+1)
\log\left(\frac{\sqrt{1+2d}+\sqrt{2+2d+d^2}}
{d}\right).\label{estimateI1}
\end{eqnarray}

In order to estimate $I_2$, we use the inequalities
$\sinh(x)<\frac{e^x}{2}$ and 
$\cosh^2(x)-1 > \frac{e^{2x}-2}{4}$, which hold for all $x\in\R$,
so that
$$
I_2 <
\int_{\arcsinh(d+1)}^R
\frac{e^{2s}}
{2\sqrt{e^{2s}-2-4d^2}}ds.
$$
Since 
$\frac{e^{2s}}
{2\sqrt{e^{2s}-2-4d^2}}$ is a primitive to
$\frac{\sqrt{e^{2s}-2-4d^2}}
{2}$ and
 $\arcsinh(x) = \log(\sqrt{x^2+1}+x)$, we obtain that
\begin{eqnarray}
I_2 & < &
\frac{\sqrt{e^{2R}-2-4d^2}-\sqrt{(\sqrt{2+2d+d^2}+d+1)^2-2-4d^2}}
{2}\nonumber\\
& < & 
\frac{\sqrt{e^{2R}-2-4d^2}-
\sqrt{
2(d+1)\sqrt{1+2d+d^2}+1+4d-2d^2}}
{2}\nonumber\\
& = & 
\frac{\sqrt{e^{2R}-2-4d^2}-
\sqrt{
8d+3}}
{2}.\label{estimateI2}
\end{eqnarray}

Using~\eqref{eqarea1},~\eqref{estimateI1} and~\eqref{estimateI2}
we obtain that, for any $d>0$ and $R>\arcsinh(d+1)$,
\begin{equation*}
\begin{array}{rcl}
\displaystyle\frac{{\rm Area}(M_d(R))}{4\pi\sqrt{1+4\tau^2}}
& < &
\displaystyle(d+1)
\log\left(\frac{\sqrt{1+2d}+\sqrt{2+2d+d^2}}
{d}\right)\\
&&+
\displaystyle\frac{\sqrt{e^{2R}-2-4d^2}-
\sqrt{
8d+3}}
{2}.
\end{array}
\end{equation*}

Since 
$$\lim_{d\to \infty}(d+1)
\log\left(\frac{\sqrt{1+2d}+\sqrt{2+2d+d^2}}
{d}\right)-
\frac{
\sqrt{
8d+3}}
{2} =0,$$
the lemma follows.
\end{proof}

Let $D_1(R)$ and $D_2(R)$ be the respective
disks in the horizontal planes $P_{u_d(R)}$ and $P_{-u_d(R)}$
such that $\partial (D_1(R) \cup D_2(R)) = \partial M_d(R)$.
Since vertical translations are isometries of $\ekt$, it
follows that
${\rm Area}(D_1(R)\cup D_2(R)) = 2{\rm Area}(D_1(R))$.
Using this fact, we prove the next result, which
compares the area of $M_d(R)$ with the area
of $D_1(R)\cup D_2(R)$ for $d$ and $R$ sufficiently large.

\begin{lemma}\label{lemma4}
Let $R(d) = \frac32\log(d)$. Then,
there exists $d_1>0$ such that
for all $d\geq d_1$ it holds that
\begin{equation}\label{eq:lower}
2{\rm Area}(D_1(R(d))) > 2\pi\sqrt{1+4\tau^2}(\sqrt{d^3-4}-\sqrt{d}).
\end{equation}
In particular, there exists $\wt{d}>0$ such that for all $d\geq \wt{d}$
it holds that 
\begin{equation}\label{eq:eqdolema}
{\rm Area}\big(D_1(R(d))\cup D_2(R(d))\big) > {\rm Area}(M_d(R)).
\end{equation}
\end{lemma}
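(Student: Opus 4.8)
The plan is to reduce the lemma to two elementary estimates: an exact formula for the area of a flat horizontal disk, from which one extracts a lower bound that keeps the constant $\sqrt{1+4\tau^2}$, and the upper bound for ${\rm Area}(M_d(R))$ already established in Lemma~\ref{lemmaepsilon}.

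\textbf{Step 1: the area of $D_1(R)$.} The disk $D_1(R)$ is the flat disk bounded inside the horizontal plane $P_{u_d(R)}$ by the boundary circle $r=R$ of $M_d(R)$, so it is the image of the rotational parametrization $(\tanh(r/2)\cos\theta,\tanh(r/2)\sin\theta,u_d(R))$ with $r\in[0,R]$, $\theta\in[0,2\pi)$. Applying~\eqref{eq:areageneral} with the constant height function $u\equiv u_d(R)$ (so $u'\equiv 0$; the case $a=0$ being the degenerate one where the annulus becomes a disk), I would obtain
\[
{\rm Area}(D_1(R))=2\pi\int_0^R\sinh(s)\sqrt{1+4\tau^2\tanh^2(s/2)}\,ds .
\]
The same identity also follows from computing the area element of $P_t$ in polar coordinates and substituting $\rho=\tanh(s/2)$.

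\textbf{Step 2: proof of~\eqref{eq:lower}.} Since $\tanh(s/2)\le 1$ one has $1+4\tau^2\tanh^2(s/2)\ge(1+4\tau^2)\tanh^2(s/2)$, hence $\sqrt{1+4\tau^2\tanh^2(s/2)}\ge\sqrt{1+4\tau^2}\,\tanh(s/2)$. Combined with the identity $\sinh(s)\tanh(s/2)=\cosh(s)-1$, this gives
\[
{\rm Area}(D_1(R))\ \ge\ 2\pi\sqrt{1+4\tau^2}\int_0^R(\cosh(s)-1)\,ds\ =\ 2\pi\sqrt{1+4\tau^2}\,(\sinh R-R).
\]
Now I would set $R=R(d)=\tfrac32\log d$, so that $\sinh R(d)=\tfrac12\bigl(d^{3/2}-d^{-3/2}\bigr)$ and
\[
2\,{\rm Area}(D_1(R(d)))\ \ge\ 2\pi\sqrt{1+4\tau^2}\,\bigl(d^{3/2}-d^{-3/2}-3\log d\bigr).
\]
Since $\sqrt{d^3-4}<d^{3/2}$, inequality~\eqref{eq:lower} holds as soon as $d^{3/2}-d^{-3/2}-3\log d>d^{3/2}-\sqrt d$, i.e. $\sqrt d>d^{-3/2}+3\log d$, which is true for every $d$ beyond some $d_1$.

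\textbf{Step 3: proof of~\eqref{eq:eqdolema}.} Because vertical translations are isometries of $\ekt$, ${\rm Area}(D_1(R(d))\cup D_2(R(d)))=2\,{\rm Area}(D_1(R(d)))$. For $d$ large one checks $R(d)=\tfrac32\log d>\arcsinh(d+1)$, so Lemma~\ref{lemmaepsilon} applies; as $e^{2R(d)}=d^3$ it yields ${\rm Area}(M_d(R(d)))<2\pi\sqrt{1+4\tau^2}\bigl(\sqrt{d^3-4d^2-2}+1\bigr)$. By~\eqref{eq:lower} it then suffices to verify $\sqrt{d^3-4}-\sqrt d\ge\sqrt{d^3-4d^2-2}+1$ for $d$ large; rationalizing,
\[
\sqrt{d^3-4}-\sqrt{d^3-4d^2-2}=\frac{4d^2-2}{\sqrt{d^3-4}+\sqrt{d^3-4d^2-2}}>\frac{4d^2-2}{2d^{3/2}}=2\sqrt d-d^{-3/2},
\]
and $2\sqrt d-d^{-3/2}\ge\sqrt d+1$ once $\sqrt d\ge 1+d^{-3/2}$. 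Chaining the three displayed bounds yields~\eqref{eq:eqdolema} for all $d\ge\wt d$ with $\wt d$ sufficiently large. I expect Step~2 to be the only delicate point: the whole estimate hinges on a lower bound for ${\rm Area}(D_1(R))$ that retains the factor $\sqrt{1+4\tau^2}$ — the naive bound $\sqrt{1+4\tau^2\tanh^2(s/2)}\ge 1$ is far too lossy when $\tau$ is large — and the pair of elementary facts $\sqrt{1+4\tau^2\tanh^2(s/2)}\ge\sqrt{1+4\tau^2}\,\tanh(s/2)$ and $\sinh(s)\tanh(s/2)=\cosh(s)-1$ is precisely what makes the resulting integral elementary while losing nothing essential; everything else is routine comparison of powers of $d$ against $\log d$.
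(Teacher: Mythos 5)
Your proof is correct, and its overall skeleton is the same as the paper's: compute ${\rm Area}(D_1(R))$ from~\eqref{eq:areageneral} with $u\equiv 0$, bound it from below, then combine with the upper bound of Lemma~\ref{lemmaepsilon} at $R(d)=\tfrac32\log d$ (where $e^{2R(d)}=d^3$) and compare. The difference lies in how~\eqref{eq:lower} is obtained. The paper evaluates the integral $\wt I$ exactly via the antiderivative $\sqrt{(u+a)(u+1)}+(a-1)\log(\sqrt{u+a}+\sqrt{u+1})$, producing the expression~\eqref{eq:itil} with a logarithmic correction, and then estimates $\cosh R$ and the log term to reduce to $\tfrac12\sqrt{e^{2R}-4}-c_1R-c_2$ before comparing $3c_1\log d+2c_2$ with $\sqrt d$. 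You instead use the pointwise bound $\sqrt{1+4\tau^2\tanh^2(s/2)}\geq\sqrt{1+4\tau^2}\tanh(s/2)$ together with $\sinh(s)\tanh(s/2)=\cosh(s)-1$, which makes the integral elementary, $\int_0^R(\cosh s-1)\,ds=\sinh R-R$, and reduces the matter to $\sqrt d>d^{-3/2}+3\log d$; crucially your bound retains the factor $\sqrt{1+4\tau^2}$, which (as you note) is exactly what is needed, since the trivial bound by $1$ would lose against the catenoid's upper bound for $\tau>0$. For the second inequality~\eqref{eq:eqdolema}, where the paper invokes the limit $\sqrt{d^3-4}-\sqrt d-\sqrt{d^3-2-4d^2}-1\to\infty$, you make the comparison explicit by rationalizing, which is equally valid and slightly more quantitative. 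In short: same structure, but your derivation of~\eqref{eq:lower} is more elementary (no exact antiderivative or auxiliary constants $c_1,c_2$), at the cost of giving a marginally weaker but still sufficient lower bound for the disk area.
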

\begin{proof}
For any $R>0$, we have that
\begin{equation*}
2{\rm Area}(D_1(R))
=4\pi\sqrt{1+4\tau^2}\int_{0}^{R}
\sinh(s)
\sqrt{\frac{\cosh(s)+\frac{1-4\tau^2}{1+4\tau^2}}{\cosh(s)+1}}ds.
\end{equation*}
In order to see this, just apply~\eqref{eq:areageneral}
for the function $u\equiv 0$, after using
the identity $\tanh^2\left(\frac{s}{2}\right) = \frac{\cosh(s)-1}{\cosh(s)+1}$.
Thus, if we denote $$\wt{I} =\displaystyle \int_{0}^{R}
\sinh(s)
\sqrt{\frac{\cosh(s)+\frac{1-4\tau^2}{1+4\tau^2}}{\cosh(s)+1}}ds,$$
it holds that
$2{\rm Area}(D_1(R)) =
4\pi\sqrt{1+4\tau^2}\ \wt{I}$.
Our next arguments estimate $\wt{I}$ from below.

First, use the substitution $u = \cosh(s)$ to
obtain that
$$\wt{I} =
\int_{1}^{\cosh(R)}
\sqrt{\frac{u+\frac{1-4\tau^2}{1+4\tau^2}}{u+1}}du.$$
Using that
for any $a\in\R$,
$$
\left.\left.
\frac{d}{d u}
\right(\sqrt{(u+a)(u+1)}+(a-1)\log\left(\sqrt{u+a}+\sqrt{u+1}\right)\right)
= \sqrt{\frac{u+a}{u+1}},
$$
we may compute $\wt{I}$ in terms of $R$ as follows
\begin{equation}\label{eq:itil}
\begin{array}{rcl}
\widetilde{I}& = &\sqrt{(\cosh(R)+\frac{1-4\tau^2}{1+4\tau^2})(\cosh(R)+1)}
-\displaystyle\frac{2}{\sqrt{1+4\tau^2}}\\
& \ & \\
&&-
\displaystyle\frac{8\tau^2}{1+4\tau^2}\log\left(\frac{
\sqrt{\cosh(R)+\frac{1-4\tau^2}{1+4\tau^2}}+\sqrt{\cosh(R)+1}}
{
\sqrt{\frac{2}{1+4\tau^2}}+\sqrt{2}}\right).
\end{array}
\end{equation}

Since $\frac{e^R}{2}<\cosh(R)<\frac{e^R+1}{2}$ and
$-1<\frac{1-4\tau^2}{1+4\tau^2}<1$, it follows 
from~\eqref{eq:itil} that
\begin{eqnarray*}
\wt{I} & > &
\sqrt{\left(\frac{e^R}{2}-1\right)
\left(\frac{e^R}{2}+1\right)}
-\frac{2}{\sqrt{1+4\tau^2}}\\
&&-
\frac{8\tau^2}{1+4\tau^2}\log\left(\frac{
\sqrt{\frac{e^R+1}{2}+1}+\sqrt{\frac{e^R+1}{2}+1}}
{
\sqrt{\frac{2}{1+4\tau^2}}+\sqrt{2}}\right)\\
& = &
\frac12\sqrt{e^{2R}-4}
-\frac{2}{\sqrt{1+4\tau^2}}
-
\frac{8\tau^2}{1+4\tau^2}\log\left(\frac{\sqrt{e^R+3}\sqrt{1+4\tau^2}}
{1+\sqrt{1+4\tau^2}}\right).
\end{eqnarray*}
Assuming that $R$ is large enough so $3<e^R$ and setting
$$c_1 = \frac{4\tau^2}{1+4\tau^2},\quad
c_2 = \frac{2}{\sqrt{1+4\tau^2}}
+\frac{8\tau^2}{1+4\tau^2}
\log\left(\frac{\sqrt{2}\sqrt{1+4\tau^2}}{1+\sqrt{1+4\tau^2}}\right),$$
we obtain that
$$\wt{I} > \frac12\sqrt{e^{2R}-4}-c_1R-c_2.$$
In particular, for $R(d) = \frac32\log(d)$,
it holds that
$$2{\rm Area}(D_1(R(d))) >
2\pi\sqrt{1+4\tau^2}\left(\sqrt{d^3-4}-3c_1\log(d)-2c_2\right),$$
for all $d$ sufficiently large.
To conclude the proof that~\eqref{eq:lower} holds, we just
notice that there exists $d_1>0$ large enough so that
for all $d\geq d_1$,
it holds that $3c_1\log(d)+2c_2 <\sqrt{d}$.

Now, to finish the proof of the lemma
it remains to show that there exists some
$\wt{d}$ such that~\eqref{eq:eqdolema} holds for all $d\geq \wt{d}$.
We first observe that
$\lim_{d\to \infty}\frac{R(d)}{\arcsinh(d+1)} =\frac32$, hence
there exists some $d_2>0$ such that for any $d\geq d_2$
it holds that $R(d)>\arcsinh(d+1)$.
Without loss of generality, we may assume that $d_2\geq d_0$,
where $d_0$ is defined in Lemma~\ref{lemmaepsilon}.
In particular, for all $d\geq d_2$
\begin{equation}\label{eq:upper}
{\rm Area}(M_d(R(d))) <2\pi\sqrt{1+4\tau^2}
\left(\sqrt{d^3-2-4d^2}+1\right).
\end{equation}

Now, we just use the fact that
$$\lim_{d\to\infty}\left(\sqrt{d^3-4}-\sqrt{d}-\sqrt{d^3-2-4d^2}-1\right) = \infty$$
to obtain some $\wt{d}\geq \max\{d_0,d_1,d_2\}$ such that for
all $d\geq \wt{d}$ 
$$\sqrt{d^3-2-4d^2}+1< \sqrt{d^3-4}-\sqrt{d}$$
and we can use~\eqref{eq:lower} and~\eqref{eq:upper} to finish the proof
of the lemma.
\end{proof}

At this point, we know that
for $d$ sufficiently large and $R(d) = \frac32\log(d)$, 
the area of $M_d(R)$ is less than the area of the two
horizontal disks with the same boundary as $M_d(R)$.
In particular, any area minimizing surface of $\ekt$
with boundary $\partial M_d(R)$ will be connected,
since the unique disconnected minimal surface with such boundary 
is $D_1(R)\cup D_2(R)$.

Our next result shows that for any
$h\in\left(0,\frac{\pi}{2}\sqrt{1+4\tau^2}\right)$ there exists
$d\geq \wt{d}$ so that
$M_d(R(d)) = M_d^h$, thereby completing the
proof of Proposition~\ref{propCatenoids}.

\begin{lemma}\label{finalprop}
For $R(d) = \frac32\log(d)$, it holds that
$$\lim_{d\to \infty} u_d(R(d)) = \frac{\pi}{2}\sqrt{1+4\tau^2}.$$
\end{lemma}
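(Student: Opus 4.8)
The plan is to analyze the limit $\lim_{d\to\infty} u_d(R(d))$ directly from the defining integral~\eqref{defud}, namely
$$u_d(R(d)) = \int_{\arcsinh(d)}^{\frac32\log d} d\sqrt{\frac{1+4\tau^2\tanh^2(r/2)}{\sinh^2(r)-d^2}}\,dr.$$
We already know from Section~\ref{sec-cat} that $\lim_{d\to\infty}\mathbf{h}(d) = \frac{\pi}{2}\sqrt{1+4\tau^2}$, where $\mathbf{h}(d) = \lim_{s\to\infty}u_d(s)$; since $u_d$ is increasing, for every $d$ we have $u_d(R(d)) < \mathbf{h}(d) < \frac{\pi}{2}\sqrt{1+4\tau^2}$, so only the lower bound needs work. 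Writing $\mathbf{h}(d) - u_d(R(d)) = \int_{R(d)}^\infty d\sqrt{\frac{1+4\tau^2\tanh^2(r/2)}{\sinh^2(r)-d^2}}\,dr$, it suffices to show this tail integral tends to $0$; combined with $\mathbf{h}(d)\to\frac{\pi}{2}\sqrt{1+4\tau^2}$ this gives the claim.

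First I would bound the tail. On $[R(d),\infty)$ we have $\tanh^2(r/2)<1$, so the integrand is at most $d\sqrt{1+4\tau^2}\,(\sinh^2 r - d^2)^{-1/2}$. For $r\ge R(d) = \frac32\log d$ one has $\sinh r \ge \frac12(e^r - e^{-r}) \ge \frac14 e^r \ge \frac14 d^{3/2}$ once $d$ is large, hence $\sinh^2 r - d^2 \ge \sinh^2 r - \tfrac{1}{8}\sinh^2 r = \tfrac78\sinh^2 r$ (using $d^2 \le \tfrac{16}{d}\sinh^2 r \le \tfrac18\sinh^2 r$ for $d\ge 128$), so $(\sinh^2 r - d^2)^{-1/2} \le \sqrt{8/7}\,(\sinh r)^{-1} \le 4\sqrt{8/7}\,e^{-r}$. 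Therefore
$$0 < \mathbf{h}(d) - u_d(R(d)) \le C\,d\int_{\frac32\log d}^\infty e^{-r}\,dr = C\,d\,e^{-\frac32\log d} = C\,d^{-1/2} \xrightarrow[d\to\infty]{} 0,$$
with $C = 4\sqrt{8/7}\sqrt{1+4\tau^2}$.

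The main obstacle is making the estimate $\sinh^2 r - d^2 \ge \tfrac78\sinh^2 r$ on the range $[R(d),\infty)$ rigorous and uniform — i.e., verifying that the integration really starts far enough above $\arcsinh d$ that the catenoid's ``waist singularity'' at $r=\arcsinh d$ is well behind us. This is precisely the computation $\lim_{d\to\infty}\frac{R(d)}{\arcsinh(d+1)} = \frac32 > 1$ that was already invoked in the proof of Lemma~\ref{lemma4}, so for $d$ large $R(d) > \arcsinh(d+1) > \arcsinh d$, and one can quantify the gap: $\sinh(R(d)) \approx \tfrac12 d^{3/2}$ while $\sinh(\arcsinh d) = d$, giving a ratio blowing up like $d^{1/2}$. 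Once this separation is in hand the exponential decay of $e^{-r}$ dominates the linear factor $d$ and the tail estimate closes; everything else is the elementary bookkeeping indicated above.
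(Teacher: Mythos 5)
Your argument is correct, but it follows a genuinely different route from the paper. The paper's proof never touches the tail of the integral: it factors out the $\tau$-dependence by bounding $u_d(R)$ from below by $\sqrt{1+4\tau^2\tanh^2(\arcsinh(d)/2)}\,\lambda_d(R)$, where $\lambda_d$ is the corresponding $\tau=0$ integral, and then computes $\lim_{d\to\infty}\lambda_d(R(d))=\pi/2$ explicitly through the change of variables $t=\arccosh\bigl(\cosh(r)/\sqrt{1+d^2}\bigr)$ (as in Lemma~7.3 of~\cite{cosk1} and Proposition~5.2 of~\cite{NSST}), squeezing against the upper bound $u_d<\frac{\pi}{2}\sqrt{1+4\tau^2}$ from~\cite{p1}. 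You instead write $u_d(R(d))=\mathbf{h}(d)-\int_{R(d)}^{\infty}d\sqrt{\tfrac{1+4\tau^2\tanh^2(r/2)}{\sinh^2(r)-d^2}}\,dr$ and show by elementary inequalities that the tail is $O(d^{-1/2})$, then invoke $\lim_{d\to\infty}\mathbf{h}(d)=\frac{\pi}{2}\sqrt{1+4\tau^2}$ as stated in Section~\ref{sec-cat}. Your estimates check out (the constants $\sinh r\geq\frac14 e^r$, $d^2\leq\frac18\sinh^2 r$ on $r\geq\frac32\log d$ for $d$ large, and the resulting bound $C\,d^{-1/2}$ are all fine), and your route is shorter, avoids the change of variables entirely, and even yields a quantitative rate $\mathbf{h}(d)-u_d(R(d))=O(d^{-1/2})$. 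The trade-off is that your proof leans on the asymptotic behaviour of $\mathbf{h}(d)$ as $d\to\infty$ as an external input (attributed in the preliminaries to~\cite{p1}), whereas the paper's proof uses from~\cite{p1} only the pointwise bound $u_d(R)<\frac{\pi}{2}\sqrt{1+4\tau^2}$ and in effect re-derives the limit of $\mathbf{h}(d)$ along the way; so the paper's argument is the more self-contained of the two, while yours is the more elementary and more precise, provided that input is accepted.
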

\begin{proof}
For a given $d>0$
let $\l_d\colon(\arcsinh(d),\infty)\to \R$ be defined by
\begin{equation}\label{defld}
\l_d(s) = \displaystyle{\int_{\arcsinh(d)}^s
\frac{d}{\sqrt{\sinh^2(r)-d^2}}dr.}
\end{equation}

It is immediate to obtain that for
all $R>\arcsinh(d)$ we have the inequality
$$u_d(R) >\sqrt{1+4\tau^2\tanh^2(\arcsinh(d)/2)}\ \lambda_d(R).$$
Since the results from Pe\~{n}afiel~\cite[Proposition~3.9]{p1}
imply that $u_d(R) < \frac{\pi}{2}\sqrt{1+4\tau^2}$
and $$\lim_{d\to \infty}\tanh^2(\arcsinh(d)/2) = 1,$$
in order to prove the lemma it suffices to show that
\begin{equation}\label{limi1}
\lim_{d\to \infty}\lambda_d\left(R(d)\right) 
= \frac{\pi}{2}.
\end{equation}
Equation~\eqref{limi1} is precisely
Lemma~7.3 of~\cite{cosk1}, but, for the sake of
completeness, we present its proof here.

We start with the change of variables
used in Proposition~5.2 of ~\cite{NSST},
$t =
\arccosh\left(\frac{\cosh(r)}{\sqrt{1+d^2}}\right)$,
so that~\eqref{defld} becomes
$$\lambda_d(s) = 
\int_0^{\arccosh\left(\frac{\cosh(s)}{\sqrt{1+d^2}}\right)}
\frac{d}{\sqrt{(1+d^2)\cosh^2(t)-1}}dt.$$
In particular, if we let
$\rho(s) = \arccosh(\sqrt{1+d^2}\cosh(s))$, it follows that
$$
\lambda_d(\rho(s)) = \int_0^s\frac{d}{\sqrt{(1+d^2)\cosh^2(t)-1}}dt.
$$

Let $s(d)$ be defined by $\rho(s(d)) 
= R(d)$. Then
\begin{equation}\label{eq:usel}
\lim_{d\to \infty}
\lambda_d(R(d)) = 
\lim_{d\to \infty}\int_0^{s(d)}\frac{d}{\sqrt{(1+d^2)\cosh^2(t)-1}}dt.
\end{equation}

For any $t>0$ it holds that
$$\frac{d}{\sqrt{1+d^2}}\frac{1}{\cosh(t)}\leq
\frac{d}{\sqrt{(1+d^2)\cosh^2(t)-1}}\leq \frac{1}{\cosh(t)},$$
and then
\begin{equation}\label{eq:desigint}
\frac{d}{\sqrt{1+d^2}}\int_0^{s(d)}\frac{1}{\cosh(t)}dt
\leq \l_d(R(d))
\leq \int_0^{s(d)}\frac{1}{\cosh(t)}dt.
\end{equation}

We note that $
s(d) = \arccosh\left(\frac{d^3+1}{2\sqrt{d^3}\sqrt{d^2+1}}\right)$;
in particular,
$\lim_{d\to\infty}s(d) = +\infty$. On the other hand,
$\int_0^{s(d)}\frac{1}{\cosh(t)}dt =
2\arctan\left(\tanh\left(\frac{s(d)}{2}\right)\right)$
from where it follows that
both lower an upper bounds on~\eqref{eq:desigint} converge to 
$\pi/2$, as $d\to \infty$.
This, together with~\eqref{eq:usel}, implies that
$$\lim_{d\to\infty}\l_d(R(d)) = \frac{\pi}{2},$$
which proves the lemma.
As already explained, this also finishes the 
proof of Proposition~\ref{propCatenoids}.
\end{proof}

\bibliographystyle{plain}
\bibliography{biblio}

\begin{thebibliography}{10}

\bibitem{CI}
J.~Castro-Infantes.
\newblock On the asymptotic plateau problem in $\widetilde{SL}_2(\mathbb{R})$.
\newblock Preprint avaiable at arXiv:2002.10911 [math.DG].

\bibitem{cosk1}
B.~Coskunuzer.
\newblock Minimal surfaces with arbitrary topology in
  $\mathbb{H}^2\times\mathbb{R}$.
\newblock Preprint avaiable at arXiv:1404.0214 [math.DG].

\bibitem{dan1}
B.~Daniel.
\newblock Isometric immersions into 3-dimensional homogeneous manifolds.
\newblock {\em Comment. Math. Helv.}, 82(1):87--131, 2007.

\bibitem{setou}
R.~Sa Earp and E.~Toubiana.
\newblock An asymptotic theorem for minimal surfaces and existence results for
  minimal graphs in $\mathbb{H}^2\times\mathbb{R}$.
\newblock {\em Math. Ann.}, (342):309--331, 2008.

\bibitem{FMMR}
L.~Ferrer, F.~Mart\'in, R.~Mazzeo, and M.~Rodr\'iguez.
\newblock Properly embedded minimal annuli in $\mathbb{H}^2\times\mathbb{R}$.
\newblock {\em Math. Ann.}, 375(1-2):541--594, 2019.

\bibitem{fp1}
A.~Folha and C.~Pe{\~{n}}afiel.
\newblock Minimal graphs in $\widetilde{PSL}(2,\mathbb{R})$.
\newblock {\em Mat. Contemp.}, (43):111--132, 2014.

\bibitem{KloMa}
B.~Kloeckner and R.~Mazzeo.
\newblock On the asymptotic behavior of minimal surfaces in {$\Bbb H^2\times
  \Bbb R$}.
\newblock {\em Indiana Univ. Math. J.}, 66(2):631--658, 2017.

\bibitem{Li}
V.~Lima.
\newblock The slab theorem for minimal surfaces in $\mathbb{E}(-1,\tau)$.
\newblock {\em Ann. Global Anal. Geom.}, 51:189--208, 2017.

\bibitem{NSST}
B.~Nelli, R.~Sa Earp, W.~Santos, and E.~Toubiana.
\newblock Uniqueness of ${H}$-surfaces in $\mathbb{H}^2\times\mathbb{R}$,
  $|{H}|\leq \frac12$.
\newblock {\em Ann. Global Anal. Geom.}, 33:307--321, 2008.

\bibitem{NeRo}
B.~Nelli and H.~Rosenberg.
\newblock Minimal surfaces in {${\Bbb H}^2\times\Bbb R$}.
\newblock {\em Bull. Braz. Math. Soc. (N.S.)}, 33(2):263--292, 2002.

\bibitem{p1}
C.~Pe{\~{n}}afiel.
\newblock Invariant surfaces in $\widetilde{PSL}_2(\mathbb{R},\kappa)$ and
  applications.
\newblock {\em Bull. Braz. Math. Soc. (N.S.)}, 4(43):545--578, 2012.

\bibitem{Younes}
R.~Younes.
\newblock Minimal surfaces in {$\widetilde{PSL_2(\Bbb R)}$}.
\newblock {\em Illinois J. Math.}, 54(2):671--712, 2010.

\end{thebibliography}

\begin{flushleft}
\textsc{Departamento de Matem\'atica}

\textsc{Universidade Federal de Santa Maria}

\textsc{Av. Roraima 1000, Santa Maria RS, 97105-900, Brazil}

\textit{Email:} patricia.klaser@ufsm.br
\end{flushleft}

\begin{flushleft}
\textsc{Mathematics Department}

 \textsc{Princeton University}

\textsc{Fine Hall, Washington Road, Princeton NJ, 08544, USA}

\textit{Email:} amenezes@math.princeton.edu
\end{flushleft}

\begin{flushleft}
\textsc{Departamento de Matem\'atica}

\textsc{Universidade Federal do Rio Grande do Sul}

\textsc{Av. Bento Gon\c{c}alves 9500, Porto Alegre RS, 91509-900, Brazil}

\textit{Email:} alvaro.ramos@ufrgs.br
\end{flushleft}

\end{document}